\newtheorem{theorem}{Theorem}[section]
\newtheorem{lemma}[theorem]{Lemma}
\newtheorem{proposition}[theorem]{Proposition}
\newtheorem{corollary}[theorem]{Corollary}
\theoremstyle{definition}
\newtheorem{definition}[theorem]{Definition}
\newtheorem{remark}[theorem]{Remark}
\renewcommand{\P}{{\mathbb{P}}}
\newcommand{\Z}{{\mathbb{Z}}}
\newcommand{\D}{{\mathbb{D}}}
\newcommand{\Ec}{{\mathcal{E}}}
\newcommand{\Fc}{{\mathcal{F}}}
\newcommand{\Gc}{{\mathcal{G}}}
\newcommand{\Kc}{{\mathcal{K}}}
\newcommand{\Oc}{{\mathcal{O}}}
\newcommand{\Qc}{{\mathcal{Q}}}
\newcommand{\Rc}{{\mathcal{R}}}
\newcommand{\Vc}{{\mathcal{V}}}
\newcommand{\Pc}{{\mathcal{P}}}
\newcommand{\Rb}{{\mathbf{R}}}
\newcommand{\Ub}{{\mathbf{U}}}
\newcommand{\Wb}{{\mathbf{W}}}
\newcommand{\ed}{\widehat{E}}
\newcommand\res[1]{{\lower1pt\hbox{$|$}}_{\raise.5pt\hbox{${\scriptstyle #1}$}}}
\numberwithin{equation}{section}
\begin{document}

\title{Tate Resolutions and Weyman Complexes}

\author{David A.\ Cox}
\address{Department of Mathematics and Computer Science, Amherst
College, Amherst, MA 01002-5000, USA}
\email{dac@cs.amherst.edu}

\author{Evgeny Materov}
\address{Zheleznogorsk's Branch of Sankt-Peterburg's University of EMERCOM, Severnaya~1, Zheleznogorsk, 662970 Russia} 
\email{materov@gmail.com}

\keywords{Tate resolution, Weyman complexes}

\begin{abstract}
We construct generalized Weyman complexes for coherent sheaves on
projective space and describe explicitly how the differential depend
on the differentials in the correpsonding Tate resolution.  We apply
this to define the Weyman complex of a coherent sheaf on a projective
variety and explain how certain Weyman complexes can be regarded as
Fourier-Mukai transforms. 
\end{abstract}

\date{\today}

\maketitle

\section*{Introduction}

In this paper we study the relation between the terms and the maps of
two very important complexes in algebra and algebraic geometry:\ Tate
resolutions and Weyman complexes.  In the first section of the paper, 
we define the generalized Weyman complex of a coherent sheaf on
projective space and construct an explicit functor that takes the Tate
resolution of the sheaf to its generalized Weyman complex.
The second section then applies this to coherent sheaves on projective
varieties and Fourier-Mukai transforms.

We begin by recalling Tate resolutions and Weyman complexes.

\subsection*{Tate Resolutions} 
Let $K$ be a field of characteristic $0$ and $W$ a vector space over
$K$ of dimension $N+1$ with dual $W^*$.  Let $E = \bigwedge (W^*)$ be
the exterior algebra of $W^*$, $S = \mathrm{Sym}(W)$ the symmetric
algebra of $W$.  We grade $E$ and $S$ so that elements of $W$ have
degree $1$ and elements of $W^*$ have degree $-1$.  Thus
$\mathrm{Sym}^k(W)$ has the degree $k$ and $\bigwedge^k (W^*)$ has the
degree $-k$.  It is well-known that coherent sheaves on the projective
space $\P(W) = (W^*-\{0\})/\!\!\sim$ can be described in terms of
graded $S$-modules.  The
Bernstein-Gel$\!$\'{}$\!$fand-Gel$\!$\'{}$\!$fand (BGG) correspondence
described in \cite{BGG} (see also \cite[Chapter 4, \S~3
]{gelfand-manin}, \cite{kapranov_gras}) consists of a pair of adjoint
functors $\mathbf{R}$ and $\mathbf{L}$ which define an equivalence
between the derived category of bounded complexes of finitely
generated $S$-modules and the category of complexes of graded free
$E$-modules.  Eisenbud, Fl\o ystad and Schreyer showed in \cite{EFS}
that the essential part of the BGG correspondence is given via the
Tate resolution. Namely, a correspondence
\[
\mathbf{R}\,:\,
\{\text{sheaves on }\P(W)\}\to
\{\text{complexes of } E\text{-modules}\}.
\]
is given by assigning to a coherent sheaf $\Fc$ on $\P(W)$ a
bi-infinite complex (the Tate resolution)
\[
% \mathbf{R}(\Fc) = 
T^{\bullet}(\Fc) : \cdots \to 
T^p(\Fc)\to T^{p+1}(\Fc)
\to \cdots, \quad p\in\Z
\]
of free graded $E$-modules. By \cite[Theorem 4.1]{EFS} the terms of
the Tate resolution $T^\bullet(\Fc)$ are given by
\[
T^p(\Fc) = \bigoplus_{i}\widehat{E}(i-p)\otimes_K
H^i(\P(W),\Fc(p-i)),
\]
where $\widehat{E} = \mathrm{Hom}_K(E,K) = \omega_E$ is a dualizing
module for $E$.  Tate resolutions give constructive methods for
computing Beilison monads \cite{EFS} and Chow forms \cite{ES}, and are
also related to Castelnuovo-Mumford regularity---see \cite{CM2} for an
example.

The description of maps in Tate resolutions is a difficult and
challenging problem.  Knowledge of the maps leads to explicit formulas
for sparse resultants, as in \cite{khetan1, khetan2}.  Tate
resolutions for Veronese embeddings were described in \cite{Cox_bez},
where it was shown that the maps can be given by the Bezoutian of
homogeneous forms.  For Segre embeddings, the paper \cite{CM1} gives a
description of the maps via hyperdeterminants.

\subsection*{Weyman Complexes.}
We recall the Weyman complex
for a resultant.  Let $X$ be an irreducible projective variety with a
very ample line bundle $\Oc_X(1)$ and a vector bundle $\Vc$.  Let $W =
H^0(X,\Oc_X(1))$ and set $d = \dim(X)$.  The incidence variety
\[
Z = \{(f_0,\dots,f_d,x) \in W^{d+1}\times X \mid f_0(x) = \cdots =
f_d(x) = 0\}
\]
gives the commutative diagram of projections
\[
\xymatrix{
Z \ar[dr] \ar@/^/[drr]^{p_2} \ar@/_/[ddr]_{p_1} \\
& W^{d+1} \times X \ar[r]_(.62){\pi_2} \ar[d]^{\pi_1} & X\\
& W^{d+1}.
}
\]
Also let $A = \mathrm{Sym}((W^{d+1})^*)$ be the coordinate ring of the
affine space $W^{d+1}$.  Then Weyman's \emph{Basic Theorem for
Resultants} \cite[(9.1.2)]{weyman2} states that for the free graded
$A$-modules
\begin{equation}
\label{weymanFp}
F^p(\Vc) = \bigoplus_i {\textstyle\bigwedge^{i-p}} K^{d+1} \otimes_K
H^i(X,\Vc(p-i)) \otimes_K A(p-i),
\end{equation}
there exist minimal graded differentials of degree $0$
\[
F^p(\Vc) \longrightarrow F^{p+1}(\Vc)
\]
such that the resulting complex $F^\bullet(\Vc)$, when regarded as a
complex of sheaves on $W^{d+1}$, satisfies
\[
F^\bullet(\Vc) \simeq \Rb p_{1*}(p_2^*\Vc).
\]
The complex $F^\bullet(\Vc)$ has certain functorial properties
\cite[(9.1.2)]{weyman2} and can be used to compute resultants
\cite[(9.1.3)]{weyman2}.  Other properties of $F^\bullet(\Vc)$
are described in \cite[(5.1.3), (5.1.4), (5.1.6)]{weyman2}.  

Weyman writes \eqref{weymanFp} as $F_{-p}$ rather than $F^p$.  We use
$F^p$ to emphasize the relation to the Tate resolution.  The
relation becomes clear when we write the Weyman complex using the
projective embedding $i: X \hookrightarrow \P(W)$ induced by
$\Oc_X(1)$.  This gives the sheaf $\Fc = i_*\Vc$ on $\P(W)$, and
\eqref{weymanFp} becomes
\[
F^p(\Vc) = \bigoplus_i {\textstyle\bigwedge^{i-p}} K^{d+1} \otimes_K
H^i(\P(W),\Fc(p-i)) \otimes_K A(p-i).
\]
In contrast, the Tate resolution of $\Fc$ has
\begin{equation}
\label{tatetp}
T^p(\Fc) = \bigoplus_i \ed(i-p) \otimes_K
H^i(\P(W),\Fc(p-i)).
\end{equation}

\subsection*{Our Results}
Our main results can be summarized as follows:
\begin{itemize}
\item First, we give an explicit formula for the differentials in the
Weyman complex in terms of the differentials in the correponding Tate
resolution.
\item Second, our construction works for \emph{any} coherent sheaf
  $\Fc$ on $\P(W)$ and allows us to replace $W^{d+1}$ with $W^\ell$ for
  \emph{any} $\ell$ satisfying $1 \le \ell \le \dim(W) - 1$.
\end{itemize}
The idea is that for $1 \le \ell \le \dim(W) - 1$, there is an additive
functor $\mathbf{W}_\ell$ from the category of finitely generated
graded free $E$-modules and homomorphisms of degree $0$ such that
$\mathbf{W}_\ell^\bullet(\Fc) =
(\mathbf{W}_\ell(T^\bullet(\Fc)),\mathbf{W}_\ell(d^p))_{p\in\Z}$.

The statement and proof of our main result, Theorem~\ref{WlRF}, is the
subject of Section~1.  In Section~2 we define generalized Weyman
complexes for irreducible projective varieties and, in the case of a
vector bundle, interpret the Weyman complex as a Fourier-Mukai
transform.

\section{Generalized Weyman Complexes}

\subsection{Notation}
Fix an integer $\ell$ satisfying $1 \le \ell \le \dim(W) - 1$ and let
$A_\ell = \mathrm{Sym}((W^\ell)^*)$ be the coordinate ring of the
affine space $W^\ell$.  We give the polynomial ring $A_\ell$ the usual
grading where every variable has degree $1$.  Note also that $W^\ell =
K^\ell \otimes_K W$, so that $A_\ell = \mathrm{Sym}(K^\ell \otimes_K
W^*)$, provided we identify $K^\ell$ with its dual using the standard
basis of $K^\ell$.

\subsection{The Functor $\Wb_\ell$}
We define an additive functor $\Wb_\ell$ from finitely generated
graded free $E$-modules and homomorphisms of degree $0$ to finitely
generated graded free $A_\ell$-modules and homomorphisms of degree $0$
as follows.  For the free $E$-module $\ed(j)$, set
\[
\Wb_\ell(\ed(j)) = {\textstyle\bigwedge^j} K^\ell \otimes_K
A_\ell(-j).
\]
For morphisms, let $H$ and $H'$ be finite-dimensional $K$-vector
spaces and let
\[
\varphi : \ed(a)\otimes_K H \longrightarrow \ed(b)\otimes_K H' 
\]
be an $E$-module homomorphism of degree $0$.  By \cite[Prop.\
5.6]{EFS}, $\varphi$ is uniquely determined by
\[
\varphi_{-b} : \ed(a)_{-b}\otimes_K H = {\textstyle\bigwedge^{a-b}}W
\otimes_K H 
\longrightarrow \ed(b)_{-b}\otimes_K H' = H'.
\]
We define
\begin{equation}
\label{Wphitodefine}
\Wb_\ell(\varphi) : \Wb_\ell(\ed(a)) \longrightarrow \Wb_\ell(\ed(b))
\end{equation}
as follows.  Recall the comultiplication map
\[
\Phi : {\textstyle\bigwedge^a} K^\ell \longrightarrow
{\textstyle\bigwedge^b} K^\ell \otimes_K {\textstyle\bigwedge^{a-b}}
K^\ell
\]
and the injective map
\[
\Psi : {\textstyle\bigwedge^{a-b}} K^\ell \otimes_K
{\textstyle\bigwedge^{a-b}} W^* \longrightarrow
\mathrm{Sym}^{a-b}(K^\ell \otimes_K W^*) = (A_\ell)_{a-b}
\]
coming from the Schur functor decomposition of
$\mathrm{Sym}^{a-b}(K^\ell \otimes_K W^*)$ (see
\cite[(2.3.3)]{weyman2}).  Finally, $\varphi_{-b}$ determines
$\widetilde{\varphi}_{-b} : H \to \bigwedge^{a-b} W^* \otimes H'$.
Putting these maps together, we get the composition
\begin{equation}
\label{Wlphi}
\begin{array}{c}
\xymatrix@C=35pt{
{\textstyle\bigwedge^a} K^\ell \otimes_K H \ar[r]^(.3){\Phi\otimes
  \widetilde{\varphi}_{-b}} & 
{\textstyle\bigwedge^b} K^\ell \otimes_K
{\textstyle\bigwedge^{a-b}} K^\ell \otimes_K
{\textstyle\bigwedge^{a-b}} W^* \otimes_K H' \ar[d]^{1\otimes 
\Psi \otimes 1}\\ 
& {\textstyle\bigwedge^b} K^\ell \otimes_K 
(A_\ell)_{a-b} \otimes_K H',
}\end{array}
\end{equation}
which in turn determines an $A_\ell$-module homomorphism of degree $0$
\[
\Wb_\ell(\varphi) : {\textstyle\bigwedge^a} K^\ell \otimes_K H
\otimes_K A_\ell(-a) \longrightarrow 
{\textstyle\bigwedge^b} K^\ell \otimes_K H' \otimes_K A_\ell(-b).
\]
This is the desired map \eqref{Wphitodefine}.  Note that $\Psi$ is
given explicitly by
\[
v_1\wedge \cdots \wedge v_{a-b} \otimes w_1\wedge \cdots \wedge
w_{a-b} \mapsto \sum_{\sigma \in S_{a-b}}
\mathrm{sgn}(\sigma)\,v_1\otimes w_{\sigma(1)} \cdots v_{a-b}\otimes
w_{\sigma(a-b)},
\]
and $\Phi$ is given by the following formula
\[
v_1\wedge\cdots\wedge v_a\mapsto
\sum \mathrm{sgn}(I, I')\, v_I\otimes v_{I'},
\]
where $v_I = v_{i_1}\wedge \cdots \wedge v_{i_p}$, $I =
\{i_1,\ldots,i_p\}\subset \{1,\ldots,a\}$, $\mathrm{sgn}(I, I')$ is
the sign of the permutation whose bottom row is $(I, I')$, and $I' =
\{1,\ldots,a\}\backslash I$ is the complement to $I$.  It follows that
we have a completely explicit formula for $\Wb_\ell(\varphi)$.

\subsection{Generalized Weyman Complexes}
We now define the generalized Weyman complex of a coherent sheaf on
$\P(W)$.

\begin{definition}
\label{weymandef}
Let $T^\bullet(\Fc) = (T^p,d^p)_{p \in \Z}$ be the Tate resolution of
a coherent sheaf $\Fc$ on $\P(W)$.  Then the \emph{\bfseries $\ell$-th
Weyman complex} of $\Fc$ is the complex of free graded
$A_\ell$-modules given by
\[
\Wb_\ell^\bullet(\Fc) = \big(\Wb_\ell(T^p),\Wb_\ell(d^p)\big)_{p \in \Z}.
\]
\end{definition}

Using \eqref{tatetp}, it follows that
\[
\Wb_\ell^p(\Fc) = \bigoplus_i {\textstyle\bigwedge^{i-p}} K^\ell
\otimes_K H^i(\P(W),\Fc(p-i)) \otimes_K A_\ell(p-i).
\]
Note also that $\Wb_\ell^\bullet(\Fc)$ is a complex by functorality
and is minimal since the Tate resolution $T^\bullet(\Fc)$ is minimal.
In contrast to the bi-infinite Tate resolution, however, the Weyman
complex is finite.  More precisely, one sees easily that
\[
\Wb_\ell^p(\Fc) = 0\quad \text{whenever}\ \ p < -\ell \ \ \text{or}\ \
p > \dim(\mathrm{supp}(\Fc)). 
\]

\begin{remark}
\label{Ulremark}
Our results about $\Wb_\ell^\bullet(\Fc)$ are parallel (and were inspired
by) the properties of the functor $\Ub_\ell^\bullet$ introduced by
Eisenbud and Schreyer in \cite{ES}.  They define this functor
on graded free $E$-modules by
\[
\Ub_\ell(\ed(j)) = {\textstyle\bigwedge^j}U_\ell,
\]
where $U_\ell$ is the tautological subbundle on the Grassmannian
$\mathbb{G}_\ell$ of codimension $\ell$ subspaces of $\P(W)$.  When
applied to the Tate resolution of a coherent sheaf $\Fc$ on $\P(W)$,
this gives the complex $\Ub_\ell^\bullet(\Fc)$ of locally free sheaves
on $\mathbb{G}_\ell$ defined in \cite{ES}.  As we develop the
properties of $\Wb_\ell^\bullet$, we will explain how our functor
relates to the corresponding properties of $\Ub_\ell^\bullet$.
\end{remark}

Our first result, similar to \cite[Prop.\ 1.3]{ES}, shows that $\Fc$
is uniquely determined by $\Wb_\ell^\bullet(\Fc)$ provided $\ell$ is
sufficiently large.

\begin{proposition}
\label{WldeterminesF}
Let $\Fc$ be a coherent sheaf on $\P(W)$.  If $\ell >
\dim(\mathrm{supp}(\Fc))$, then $\Fc$ is determined by the complex
$\Wb_\ell^\bullet(\Fc)$. 
\end{proposition}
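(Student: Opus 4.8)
The plan is to reconstruct $\Fc$ from the complex $\Wb_\ell^\bullet(\Fc)$ by recovering a single differential of the Tate resolution and then using that a minimal exact complex of free $E$-modules is determined by any one of its differentials.  Put $d' = \dim(\mathrm{supp}(\Fc))$, so that $d' < \ell$; we may assume $\Fc \ne 0$.  Since $\Fc$ is supported in dimension $d'$, Grothendieck vanishing gives $H^i(\P(W),\Fc(m)) = 0$ for $i > d'$ and all $m$.  Hence
\[
T^{-1}(\Fc) = \bigoplus_{i=0}^{d'}\ed(i+1)\otimes_K H^i(\Fc(-1-i)),\qquad
T^0(\Fc) = \bigoplus_{i=0}^{d'}\ed(i)\otimes_K H^i(\Fc(-i)),
\]
and every twist of $\ed$ occurring here lies in $[0,d'+1]\subseteq[0,\ell]$ --- exactly the range on which $\Wb_\ell(\ed(j)) = \bigwedge^j K^\ell\otimes_K A_\ell(-j)$ is nonzero.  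This is the only place the hypothesis $d'<\ell$ will be used.

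First I would recover the graded free $E$-modules $T^{-1}(\Fc)$, $T^0(\Fc)$ and the differential $d^{-1}\colon T^{-1}(\Fc)\to T^0(\Fc)$, up to isomorphism, from $\Wb_\ell^\bullet(\Fc)$.  Since $\Wb_\ell^\bullet(\Fc)$ is minimal, the space of minimal generators of $\Wb_\ell^p(\Fc)$ in internal degree $e$ has $K$-dimension $\binom{\ell}{e}\dim_K H^{p+e}(\Fc(-e))$ (for fixed $p$ only the summand $i = p+e$ contributes, as $i\mapsto i-p$ is injective).  As $\binom{\ell}{e}\ne 0$ for $0\le e\le\ell$, this recovers $\dim_K H^{p+e}(\Fc(-e))$ and hence the isomorphism class of $T^p(\Fc)$ for $p\in\{-1,0\}$; moreover distinct summands of $T^{-1}(\Fc)$ and of $T^0(\Fc)$ have distinct twists, so their direct sum decompositions --- and with them the matrix-entry structure of $\Wb_\ell(d^{-1})$ --- are intrinsic.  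Thus it remains to recover each entry, an $E$-module map $\varphi\colon\ed(a)\otimes_K H\to\ed(b)\otimes_K H'$ of degree $0$ with $0\le b\le a\le\ell$ (the case $b>a$ forces $\varphi = 0$), from $\Wb_\ell(\varphi)$.  By construction $\Wb_\ell(\varphi)$ is determined by its restriction to the generating space $\bigwedge^a K^\ell\otimes_K H$, which is the composite \eqref{Wlphi}; factoring $\Phi\otimes\widetilde\varphi_{-b} = (\Phi\otimes1)\circ(1\otimes\widetilde\varphi_{-b})$, this restriction equals $J\circ(1\otimes\widetilde\varphi_{-b})$ with $J = (1\otimes\Psi\otimes1)\circ(\Phi\otimes1)$.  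Now $J$ is injective: $\Psi$ is injective, and $\Phi\colon\bigwedge^a K^\ell\to\bigwedge^b K^\ell\otimes_K\bigwedge^{a-b}K^\ell$ is injective for $0\le b\le a\le\ell$ (immediate from the explicit formula for $\Phi$).  An injective map is left-cancellable, so $\Wb_\ell(\varphi)$ determines $1\otimes\widetilde\varphi_{-b}$, and since $\bigwedge^a K^\ell\ne 0$ it determines $\widetilde\varphi_{-b}$, hence $\varphi_{-b}$, hence $\varphi$ by \cite[Prop.\ 5.6]{EFS}.

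Finally, the single differential $d^{-1}$ determines the whole Tate resolution: $E$ is a Frobenius algebra, so free $E$-modules are injective, and the minimality and exactness of $T^\bullet(\Fc)$ force the part of $T^\bullet(\Fc)$ to the right of $T^0$ to be the minimal injective resolution of $\mathrm{coker}(d^{-1}) = T^0(\Fc)/\mathrm{im}(d^{-1})$ and the part to the left of $T^{-1}$ to be built by successively taking minimal free covers, starting from $\ker(d^{-1})$.  Since $\Fc$ is recovered from its Tate resolution by \cite[Theorem 4.1]{EFS} together with the BGG equivalence, the proposition follows.

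I expect the main obstacle to be the faithfulness of $\Wb_\ell$ on the Hom-spaces used in the second paragraph.  A priori, cancellation inside the comultiplication $\Phi$ could destroy information about a matrix entry $\varphi$ of $d^{-1}$; the point is that the factorization above routes $\Wb_\ell(\varphi)$ through the injective map $J = (1\otimes\Psi\otimes1)\circ(\Phi\otimes1)$, which is available precisely because $d'<\ell$ guarantees $a\le\ell$ and hence $\bigwedge^a K^\ell\ne 0$ for every twist $a$ occurring in $T^{-1}(\Fc)$ and $T^0(\Fc)$.
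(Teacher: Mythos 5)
Your proof is correct and follows essentially the same route as the paper's: you establish that $\varphi\mapsto\Wb_\ell(\varphi)$ is faithful for twists in $[0,\ell]$ by exploiting the injectivity of $\Phi$ and $\Psi$, observe that the hypothesis $\ell>\dim(\mathrm{supp}(\Fc))$ puts every twist occurring in $T^{-1}(\Fc)\to T^0(\Fc)$ into that range, and then recover $\Fc$ from this single Tate differential. The only differences are cosmetic: you factor the generator-level map through the injection $J=(1\otimes\Psi\otimes1)\circ(\Phi\otimes1)$ where the paper uses a commutative diagram with $\varphi_{-b}$, and you spell out the reconstruction of the full Tate resolution (minimal injective resolution of the cokernel, minimal free resolution of the kernel) that the paper delegates to \cite[Prop.\ 1.3]{ES}.
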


\begin{proof}
A map $\varphi : \ed(a) \otimes_K H \to \ed(b) \otimes_K
H'$ gives $\Wb_\ell(\varphi) : \Wb_\ell(\ed(a)) \to \Wb_\ell(\ed(b))$.
We claim that $\varphi \mapsto \Wb_\ell(\varphi)$ is injective when $0
\le b \le a \le \ell$.

To prove this, assume that $\Wb_\ell(\varphi)$ is trivial.  This
implies that the map defined in \eqref{Wlphi} vanishes.  Since $\Psi$
is injective, it follows that
\[
\Phi \otimes \widetilde{\varphi}_{-b}: 
{\textstyle\bigwedge^a} K^\ell \otimes_K H \to 
{\textstyle\bigwedge^b} K^\ell \otimes_K {\textstyle\bigwedge^{a-b}}
K^\ell \otimes_K {\textstyle\bigwedge^{a-b}} W^* \otimes_K H'
\]
is trivial, which in turn implies that
\[
\Phi \otimes \varphi_{-b}: 
{\textstyle\bigwedge^a} K^\ell \otimes_K
{\textstyle\bigwedge^{a-b}} W \otimes_K H \to {\textstyle\bigwedge^b}
K^\ell\otimes_K {\textstyle\bigwedge^{a-b}} K^\ell \otimes_K H'
\]
is trivial.  Then the commutative diagram
\[
\xymatrix@C=35pt{
{\textstyle\bigwedge^a} K^\ell \otimes_K 
{\textstyle\bigwedge^{a-b}} W \otimes_K 
H \ar[r]^(.42){\Phi\otimes 1 \otimes 1} \ar[d]^{1\otimes\varphi_{-b}}& 
{\textstyle\bigwedge^b} K^\ell \otimes_K {\textstyle\bigwedge^{a-b}}
K^\ell \otimes_K {\textstyle\bigwedge^{a-b}} W \otimes_K 
H \ar[d]^{1\otimes 1\otimes
\varphi_{-b}}\\ 
{\textstyle\bigwedge^a} K^\ell \otimes_K H' \ar[r]^(.43){\Phi\otimes 1}
& {\textstyle\bigwedge^b} K^\ell \otimes_K
{\textstyle\bigwedge^{a-b}} K^\ell \otimes_K H'
}
\]
and the injectivity of $\Phi$ (this uses $0 \le b \le a \le \ell$)
show that $1\otimes \varphi_{-b} = 0$.  It follows that
$\varphi_{-b}$ and hence $\varphi$ are trivial, as claimed.

Now we argue as in the proof of \cite[Prop.\ 1.3]{ES}.  Set $d =
\dim(\mathrm{supp}(\Fc))$, so that $T^{-1}(\Fc) \to T^0(\Fc)$ looks
like 
\[
\xymatrix@R=0pt@C=15pt{
\ed(d+1) \otimes_K H^d(\P(W),\Fc(-d-1)) \ar[r] \ar[ddr] \ar[ddddr] &
\ed(d) \otimes_K H^d(\P(W),\Fc(-d))\\ 
\bigoplus & \bigoplus \\ 
 \vdots \ar[ddr]& \vdots \\ 
\bigoplus & \bigoplus \\ 
\ed(1) \otimes_K H^0(\P(W),\Fc(-1)) \ar[r]  &
\ed(0) \otimes_K H^0(\P(W),\Fc(0)).
}
\]
Since $\ell > d = \dim(\mathrm{supp}(\Fc))$, all of the maps $\varphi$
in this diagram satisfy $0 \le a \le b \le \ell$, which means that
these maps are determined uniquely by the map $\Wb_\ell^{-1}(\Fc) \to
\Wb_\ell^{0}(\Fc)$ in the Weyman complex.  Thus the Weyman complex of
$\Fc$ determines $T^{-1}(\Fc) \to T^0(\Fc)$, which in turn determines
$\Fc$ by usual properties of the Tate resolution (see the proof of
\cite[Prop.\ 1.3]{ES} for the details).
\end{proof}

\subsection{Main Theorem}
Now consider the incidence variety
\[
Z_\ell = \{(f_1,\dots,f_\ell,x) \in W^\ell\times \P(W) \mid f_1(x) =
\cdots = f_\ell(x) = 0\}
\]
and the commutative diagram of projections
\begin{equation}
\label{Zldiagram}
\begin{array}{c}
\xymatrix{
Z_\ell \ar[dr] \ar@/^/[drr]^{p_2} \ar@/_/[ddr]_{p_1} \\
& W^{\ell} \times \P(W) \ar[r]_(.6){\pi_2}
\ar[d]^{\pi_1} & \P(W)\\ 
& W^{\ell}.
}\end{array}
\end{equation}

Here is the main theorem of this section.

\begin{theorem}
\label{WlRF}
Let $\Fc$ be a coherent sheaf on $\P(W)$ and assume $1 \le \ell \le
\dim(W)-1$.  Then $\Wb_\ell^\bullet(\Fc)$, when regarded as a complex
of sheaves on $W^\ell$, represents $\mathbf{R}p_{1*}(p_2^*\Fc)$ in the
derived category of coherent sheaves on $W^\ell$.
\end{theorem}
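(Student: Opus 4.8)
The plan is to compute $\mathbf{R}p_{1*}(p_2^*\Fc)$ directly by resolving $p_2^*\Fc$ along the fibers of $p_1$ and then identifying the resulting complex of $A_\ell$-modules with $\Wb_\ell^\bullet(\Fc)$ term by term and map by map. First I would describe the incidence variety $Z_\ell$ as the zero locus inside $W^\ell\times\P(W)$ of a canonical section of the rank-$\ell$ bundle $\Oc(1)^{\oplus\ell}$; concretely, the universal forms $(f_1,\dots,f_\ell)$ define a section $s$ of $\pi_2^*\Oc_{\P(W)}(1)\otimes (K^\ell)^*$ on $W^\ell\times\P(W)$, and $Z_\ell$ is its vanishing scheme. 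Since this section is regular (the fibers of $Z_\ell\to\P(W)$ are linear subspaces of $W^\ell$ of the expected codimension $\ell$, using $\ell\le\dim(W)-1$), the Koszul complex
\[
0\to {\textstyle\bigwedge^\ell}(K^\ell)\otimes\pi_2^*\Oc(-\ell)\to\cdots\to K^\ell\otimes\pi_2^*\Oc(-1)\to\Oc_{W^\ell\times\P(W)}\to\Oc_{Z_\ell}\to 0
\]
is a locally free resolution of $\Oc_{Z_\ell}$. Tensoring with $\pi_2^*\Fc$ (which is flat over $\P(W)$-directions so nothing bad happens, or one works on the level of sheaves and notes the Koszul complex is still a resolution of $\pi_2^*\Fc\otimes\Oc_{Z_\ell}=(\iota_\ell)_*p_2^*\Fc$ since the section remains regular on $\operatorname{supp}\Fc$ precisely when $\ell\le\dim(W)-1$; the case where $\Fc$ has small support needs a remark but the Koszul complex is still exact enough to compute derived pushforward), we get
\[
\mathbf{R}p_{1*}(p_2^*\Fc)\simeq \mathbf{R}\pi_{1*}\Big({\textstyle\bigwedge^\bullet}(K^\ell)\otimes\pi_2^*\Fc(-\bullet)\Big).
\]

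Next I would compute this hypercohomology via the spectral sequence of the double complex obtained by combining the Koszul filtration with a Čech (or injective) resolution computing $\mathbf{R}\pi_{1*}$. Because $\pi_1:W^\ell\times\P(W)\to W^\ell$ is the projection off $\P(W)$ and $A_\ell$ is the coordinate ring of $W^\ell$, one has $\mathbf{R}\pi_{1*}(\pi_2^*\Fc(j)\otimes\pi_1^*\widetilde{M})\simeq \bigoplus_i H^i(\P(W),\Fc(j))\otimes_K M$ placed in degree $i$, for any $A_\ell$-module $M$; taking $M=A_\ell$ and tracking the Koszul grading shift, the $E_1$ page has exactly the terms
\[
\bigoplus_i {\textstyle\bigwedge^{i-p}}K^\ell\otimes_K H^i(\P(W),\Fc(p-i))\otimes_K A_\ell(p-i),
\]
which is precisely $\Wb_\ell^p(\Fc)$. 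One checks the spectral sequence degenerates suitably (only one potentially nonzero way to assemble, after the standard ``strange'' reindexing that turns the cohomological degree $i$ and the Koszul degree into the single total degree $p$, exactly as in the classical derivation of Weyman's Basic Theorem, cf.\ \cite[(5.1.2)]{weyman2}). This identifies the terms; the object $\mathbf{R}p_{1*}(p_2^*\Fc)$ is then represented by \emph{some} complex with these terms.

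The main obstacle — and the real content — is matching the \emph{differentials}, i.e.\ showing that the connecting maps produced by the spectral sequence / the Koszul resolution agree with $\Wb_\ell(d^p)$, where $d^p$ are the Tate differentials. For this I would invoke the explicit description of the Tate differentials from \cite{EFS}: the component of $d^p$ from $\ed(a)\otimes H^i(\Fc(p-i))$ to $\ed(b)\otimes H^j(\Fc(p+1-j))$ is, after the Prop.\ 5.6 reduction, a map $H^i(\Fc(p-i))\to \bigwedge^{a-b}W^*\otimes H^j(\Fc(p+1-j))$ built from the multiplication maps $W^*\otimes H^k(\Fc(m))\to H^k(\Fc(m-1))$ (the ``linear'' part) together with the Yoneda/connecting maps $H^k(\Fc(m))\to W^*\otimes H^{k+1}(\Fc(m-1))$ coming from the Euler sequence (the ``higher'' parts). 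On the geometric side, the Koszul differential $\bigwedge^{i-p}K^\ell\otimes\pi_2^*\Oc(-(i-p))\to\bigwedge^{i-p-1}K^\ell\otimes\pi_2^*\Oc(-(i-p-1))$ is contraction against the tautological section, i.e.\ exactly multiplication by the universal forms, which under $H^0(\P(W),\Oc(1))=W$ is the pairing $W\otimes\bigwedge^{i-p}K^\ell\to\bigwedge^{i-p-1}K^\ell\otimes(A_\ell)_1$ realized by $\Phi$ composed with $\Psi$ — and the cohomology-level effect of this Koszul differential, together with the Čech connecting maps identifying $\mathbf{R}\pi_{1*}$, reproduces precisely the multiplication-and-Euler-connecting structure that defines the Tate differential, pushed through the comultiplication $\Phi$ and the Schur inclusion $\Psi$. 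In other words, the diagram-chase built into the definition \eqref{Wlphi} of $\Wb_\ell(\varphi)$ is exactly the diagram-chase that computes the edge maps of the spectral sequence. I would organize this comparison by decomposing the Tate differential into its homogeneous pieces (linear part versus the contributions raising cohomological degree) and checking each piece separately against the corresponding piece of the spectral-sequence differential, the bookkeeping of signs being handled by $\operatorname{sgn}(I,I')$ and $\operatorname{sgn}(\sigma)$ in the formulas for $\Phi$ and $\Psi$. Granting the term-level identification and this map-level compatibility, $\Wb_\ell^\bullet(\Fc)$ and the complex representing $\mathbf{R}p_{1*}(p_2^*\Fc)$ are isomorphic as complexes of $A_\ell$-modules, which is the assertion of the theorem.
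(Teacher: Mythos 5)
Your proposal correctly identifies the mechanism for getting the \emph{terms}: the Koszul resolution of $\Oc_{Z_\ell}$ plus the hypercohomology spectral sequence, exactly as in Weyman's Basic Theorem \cite[(5.1.2)]{weyman2}, with Bott vanishing controlling the $E_1$ page. The real issue, which you flag yourself, is the \emph{differentials}, and this is where the proposal has a genuine gap.

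You assert that the Tate differential from $\ed(a)\otimes H^i(\Fc(p-i))$ to $\ed(b)\otimes H^j(\Fc(p+1-j))$ is ``built from the multiplication maps together with the Yoneda/connecting maps,'' and then propose to match these pieces against the spectral-sequence differentials. But the Tate resolution is produced by taking a minimal free resolution of $\mathrm{Hom}_E(E,M_r)$ for $r\gg 0$; its differentials are determined by minimality, not by a closed formula, and there is no canonical homogeneous-piece decomposition available to compare piecewise with $d_1, d_2,\dots$ of a Koszul hypercohomology spectral sequence. Even granting such a comparison, identifying all the spectral-sequence differentials only gives a filtered object whose associated graded pieces are $\Wb_\ell^p(\Fc)$; assembling from that a genuine complex with those terms \emph{and} differentials $\Wb_\ell(d^p)$ requires additional degeneration and splitting arguments that the proposal does not supply. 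The step you describe as ``bookkeeping of signs'' is actually where the entire content of the theorem sits.

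The paper sidesteps all of this by never running the Koszul spectral sequence on $\Fc$ directly. The move is to replace $\Fc$ in the derived category by its Beilinson monad $B^\bullet(\Fc)$, whose terms are $\bigoplus_i H^i(\Fc(p-i)) \otimes \Omega_{\P(W)}^{i-p}(i-p)$ and whose differentials are, by construction, the \emph{contractions} $\widetilde{\phi}$ obtained from the Tate differentials $\varphi$ via $\varphi\mapsto\phi\in\bigwedge^{a-b}W^*\otimes\mathrm{Hom}(H,H')\mapsto\widetilde{\phi}$. Then one needs only two facts: (i) $\mathbf{R}p_{1*}(p_2^*\Omega^a_{\P(W)}(a))$ is concentrated in degree $0$ and equals $\Wb_\ell(\ed(a))$ (this is where the Bott vanishing computation you allude to, and the hypothesis $\ell\le\dim(W)-1$, actually lives); and (ii) under these identifications, $p_{1*}$ of contraction-by-$\phi$ coincides with $\Wb_\ell(\varphi)$ --- a single commutative square, proved via Cauchy, Pieri, and Bott applied to the Schur decomposition of $H^0(Z_\ell, p_2^*\Omega^a(a))$. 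Fact (i) collapses $\mathbf{R}p_{1*}(p_2^*B^\bullet(\Fc))$ to the ordinary pushforward term by term, so there is no spectral-sequence reassembly problem at all. In short: the spirit of your computation (Koszul plus Bott) is sound, but the missing idea is to route through the Beilinson monad, which replaces the hard two-sided comparison of differentials with a tractable functoriality check on the building blocks $\Omega^a(a)$.
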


Before we can prove this theorem, however, we need some preliminary
work.

\subsection{A Special Weyman Complex}
First observe that since $W^\ell = \mathrm{Spec}(A_\ell)$ is affine,
$\mathbf{R}p_{1*}(p_2^*\Fc)$ can be regarded as a complex of sheaves
that computes the cohomology groups $H^q(Z_\ell,p_2^*\Fc)$.  We also
note that the natural action of $K^\times = K\setminus\{0\}$ on
$W^\ell$ (and corresponding trivial action on $\P(W)$) makes
$H^q(Z_\ell,p_2^*\Fc)$ into a graded $A_\ell$-module.  Here is a
special case where this module is known explicitly.

\begin{proposition}
\label{WOaa}
If $\ell$ satisfies $1 \le \ell \le \dim(W)-1$, then the graded
$A_\ell$-module $H^q(Z_\ell,p_2^*\Omega^a_{\P(W)}(a))$ is given by
\[
H^q(Z_\ell,p_2^*\Omega^a_{\P(W)}(a)) = \begin{cases}
{\textstyle\bigwedge^{a}} K^\ell \otimes_K A_\ell(-a) =
\Wb_\ell(\ed(a)) & q = 0 \\ 0 & q > 0. \end{cases}
\]
\end{proposition}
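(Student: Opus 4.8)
The plan is to realize the incidence variety $Z_\ell$ as a complete intersection inside $X := W^\ell \times \P(W)$, resolve $\Oc_{Z_\ell}$ by the corresponding Koszul complex, tensor with $\pi_2^*\Omega^a_{\P(W)}(a)$, and read off the cohomology from Bott's formula. The place where the hypothesis $\ell \le \dim W - 1$ does its work is the very last step.

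First I would observe that the equation $f_i(x) = 0$ cutting out $Z_\ell$ is the vanishing of a section of $\pi_2^*\Oc_{\P(W)}(1)$ on $X$, and that, keeping track of the $K^\times$-action on $W^\ell$, the $\ell$ equations together assemble into a morphism
\[
\beta \colon K^\ell \otimes_K \pi_1^*\widetilde{A_\ell(-1)} \otimes_{\Oc_X} \pi_2^*\Oc_{\P(W)}(-1) \longrightarrow \Oc_X
\]
with cokernel $\Oc_{Z_\ell}$. A dimension count shows $Z_\ell$ has codimension exactly $\ell$ in $X$: the fibre of $p_2$ over $x \in \P(W)$ is $(W_x)^\ell$, where $W_x = \{f \in W : f(x) = 0\}$ has dimension $N$, so $\dim Z_\ell = N + \ell N$, whereas $\dim X = \ell(N+1) + N$. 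Since $X$ is smooth and the number of defining equations equals the codimension, the Koszul complex on $\beta$ is a resolution of $\Oc_{Z_\ell}$, with term in cohomological degree $-k$ equal to ${\textstyle\bigwedge^k} K^\ell \otimes_K \pi_1^*\widetilde{A_\ell(-k)} \otimes_{\Oc_X} \pi_2^*\Oc_{\P(W)}(-k)$ for $0 \le k \le \ell$.

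Next I would tensor this resolution with the locally free sheaf $\pi_2^*\Omega^a_{\P(W)}(a)$, obtaining a complex $C^\bullet$ resolving $p_2^*\Omega^a_{\P(W)}(a)$ on $X$, with $C^{-k} = {\textstyle\bigwedge^k} K^\ell \otimes_K \pi_1^*\widetilde{A_\ell(-k)} \otimes_{\Oc_X} \pi_2^*\Omega^a_{\P(W)}(a-k)$. Because $\pi_2$ is an affine morphism and $W^\ell$ is affine, $H^q(Z_\ell, p_2^*\Omega^a_{\P(W)}(a))$ is the hypercohomology of $C^\bullet$ on $X$, and the hypercohomology spectral sequence --- using the K\"unneth identity $H^*(X, \pi_1^*\widetilde M \otimes \pi_2^*\Gc) = M \otimes_K H^*(\P(W),\Gc)$ --- has
\[
E_1^{-k,\,t} = {\textstyle\bigwedge^k} K^\ell \otimes_K H^t\!\big(\P(W),\,\Omega^a_{\P(W)}(a-k)\big) \otimes_K A_\ell(-k).
\]

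Finally I would invoke Bott's formula for $H^t(\P(W),\Omega^a_{\P(W)}(j))$ with the exterior degree $p=a$ fixed and twist $j = a-k$, where $0 \le k \le \ell \le N$, hence $a-N \le j \le a$. Bott's formula then annihilates $E_1^{-k,t}$ for every $(k,t)$ except $t=a$, $k=a$ (so $j=0$ and $H^a(\P(W),\Omega^a_{\P(W)}) \cong K$), leaving $E_1^{-a,a} = {\textstyle\bigwedge^a} K^\ell \otimes_K A_\ell(-a)$; if $a>\ell$ there is no such term, but then ${\textstyle\bigwedge^a}K^\ell = 0$ as well, while for $a=0$ the surviving term is $E_1^{0,0} = A_\ell$. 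In each case a single nonzero $E_1$-term remains, sitting in total degree $0$, so the spectral sequence degenerates and $H^q(Z_\ell, p_2^*\Omega^a_{\P(W)}(a))$ equals ${\textstyle\bigwedge^a} K^\ell \otimes_K A_\ell(-a) = \Wb_\ell(\ed(a))$ for $q=0$ and $0$ for $q>0$, with the graded $A_\ell$-module structure coming from $\pi_{1*}$. The main obstacle is precisely this last step: one must check that Bott's formula kills every $E_1$-term but the intended one, and this is exactly where $\ell \le \dim W - 1 = N$ is used, since it confines the Koszul index to $0 \le k \le N$. If instead $\ell = \dim W$, the $k=\ell$ term of the Koszul complex contributes a nonzero $H^N$, the spectral sequence no longer collapses, and the cohomology picks up extra terms (morally the source of the resultants that appear later). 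A secondary point demanding care is the degree-and-twist bookkeeping in the definition of $\beta$ and its Koszul complex, arranged so that all differentials come out homogeneous of degree $0$.
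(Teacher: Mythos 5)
Your proof is correct and essentially reproduces the paper's argument: both approaches resolve $\Oc_{Z_\ell}$ by the Koszul complex of the rank-$\ell$ tautological section, tensor with $\pi_2^*\Omega^a_{\P(W)}(a)$, and then use Bott vanishing on $\P(W)$ to show that (under $\ell \le N$) the only surviving contribution is $\bigwedge^a K^\ell \otimes A_\ell(-a)$ in total degree $0$. The paper simply delegates the Koszul/spectral-sequence bookkeeping to \cite[(5.1.2)]{weyman2} and presents the resulting $E_1$-page as the complex \eqref{wey_complex}, whereas you re-derive it directly via the hypercohomology spectral sequence, but the decisive case analysis on $i\in\{0,a,N\}$ and the role of $\ell \le \dim(W)-1$ are identical.
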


\begin{proof}
The proof of \cite[(5.1.2)]{weyman2} is easily adapted to show
that $H^q(Z_\ell,p_2^*\Omega^a_{\P(W)}(a))$ is the $q$-th cohomology of
the complex 
\begin{align}
\label{wey_complex}
\cdots \to\ &\bigoplus_{i\ge 0} {\textstyle\bigwedge^{i-q}} K^\ell
\otimes_K H^i(\P(W),\Omega^a_{\P(W)}(a +q-i)) \otimes_K A_\ell(q-i)
\to \nonumber \\ &\bigoplus_{i\ge0} {\textstyle\bigwedge^{i-q-1}} K^\ell
\otimes_K H^i(\P(W),\Omega^a_{\P(W)}(a+q+1-i)) \otimes_K A_\ell(q+1-i)
\to \cdots
\end{align}

Note $\dim(\P(W)) = N$ since $\dim(W) = N+1$.  The Bott
vanishing theorem (proved in many papers, including \cite{Huang})
states that
\[
H^i(\P(W),\Omega^a_{\P(W)}(k)) = 0
\]
whenever
\[
i \notin \{0,a,N\}\ \text{ or }\ i = 0, k \le a\ \text{ or }\ i = a, k
\ne 0 \ \text{ or }\ i = N, k \ge a-N.
\]
Then the general $q$-th term of the above complex for $i \in \{0, a,
N\}$ reduces to
\begin{align*}
&{\textstyle\bigwedge^{-q}} K^\ell \otimes_K
H^0(\P(W),\Omega^a_{\P(W)}(a+q)) \otimes_K A_\ell(q)
\\ {\displaystyle\bigoplus}\ 
&{\textstyle\bigwedge^{a-q}} K^\ell \otimes_K
H^a(\P(W),\Omega^a_{\P(W)}(q)) \otimes_K A_\ell(q-a)
\\ {\displaystyle\bigoplus}\ 
&{\textstyle\bigwedge^{N-q}} K^\ell \otimes_K
H^N(\P(W),\Omega^a_{\P(W)}(a+q-N)) \otimes_K A_\ell(q-N).
\end{align*}
There are the following cases with respect to the different $i$:
\begin{itemize}
\item If $i = 0$, then either ${\textstyle\bigwedge^{-q}} K^{\ell}= 0$
for $q>0$, or $H^0(\P(W),\Omega^a_{\P(W)}(a+q)) = 0$ for $q\le 0$ from
the Bott vanishing theorem. Therefore, the first summand is zero for
all $q$.
\item If $i = a$, then the Bott vanishing theorem implies 
\begin{align*}
&\quad{\textstyle\bigwedge^{a-q}} K^\ell \otimes_K
H^a(\P(W),\Omega^a_{\P(W)}(q)) \otimes_K A_\ell(q-a)
\\ =\ & \begin{cases}
{\textstyle\bigwedge^{a}} K^\ell \otimes_K
H^a(\P(W),\Omega^a_{\P(W)}) \otimes_K A_\ell(-a) &
q = 0 \\ 0 & q \ne 0. \end{cases}
\end{align*}
\item If $i = N$, then ${\textstyle\bigwedge^{N-q}} K^\ell = 0$ for
$q<0$ because we assume that $\ell \le \dim(W) - 1 = N$. If $q\ge 0$,
then the Bott vanishing theorem says that
$H^N(\P(W),\Omega^a_{\P(W)}(a + q-N)) = 0$.
\end{itemize}
Our conclusion is that the complex reduces to 
\[
\cdots \longrightarrow {\textstyle\bigwedge^{a}} K^\ell
\otimes_K H^a(\P(W),\Omega^a_{\P(W)}) \otimes_K A_\ell(-a)
\longrightarrow \cdots,
\]
where the nonzero module occurs where $q = 0$.  Since
$H^a(\P(W),\Omega_{\P(W)}^a) = K$, it follows that
\[
H^q(Z_\ell,p_2^*\Omega_{\P(W)}^a(a)) = \begin{cases}
{\textstyle\bigwedge^{a}} K^\ell \otimes_K A_\ell(-a) =
\Wb_\ell(\ed(a)) & q = 0 \\ 0 & q > 0. \end{cases}\qedhere
\]
\end{proof}

\begin{corollary}
\label{propcor}
The derived functor
$\mathbf{R}p_{1*}(p_2^*\Omega_{\P(W)}^a(a))$ is canonically
represented by the complex of graded $A_\ell$-modules consisting of
$\Wb_\ell(\ed(a))$ concentrated in degree
$0$.  In other words,
\[
\mathbf{R}p_{1*}(p_2^*\Omega_{\P(W)}^a(a)) =
p_{1*}(p_2^*\Omega_{\P(W)}^a(a)) = \Wb_\ell(\ed(a)).
\]
\end{corollary}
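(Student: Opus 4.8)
The plan is to obtain the corollary as a formal consequence of Proposition~\ref{WOaa}. By the observation made just before that proposition, $W^\ell = \mathrm{Spec}(A_\ell)$ is affine, so $\mathbf{R}p_{1*}(p_2^*\Omega^a_{\P(W)}(a))$ is represented by a complex of quasi-coherent sheaves on $W^\ell$ whose $q$-th cohomology sheaf corresponds, under the equivalence between quasi-coherent $\Oc_{W^\ell}$-modules and $A_\ell$-modules, to the graded $A_\ell$-module $H^q(Z_\ell, p_2^*\Omega^a_{\P(W)}(a))$ (the grading coming from the $K^\times$-action on $W^\ell$ recorded there). In particular the cohomology sheaves of $\mathbf{R}p_{1*}(p_2^*\Omega^a_{\P(W)}(a))$ are the sheaves associated to these $A_\ell$-modules, and $\mathcal{H}^0$ equals $p_{1*}(p_2^*\Omega^a_{\P(W)}(a))$.

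Proposition~\ref{WOaa} then says precisely that this complex has cohomology vanishing in positive degrees and equal to $\Wb_\ell(\ed(a))$ in degree $0$; since $\mathbf{R}p_{1*}$ of a sheaf has no cohomology in negative degrees, the object $\mathbf{R}p_{1*}(p_2^*\Omega^a_{\P(W)}(a))$ has cohomology concentrated in degree $0$. For any object $C$ of a derived category with $\mathcal{H}^i(C) = 0$ for $i \ne 0$, the good truncations give canonical quasi-isomorphisms $C \xleftarrow{\ \sim\ } \tau_{\le 0} C \xrightarrow{\ \sim\ } \mathcal{H}^0(C)[0]$, hence a canonical isomorphism $C \cong \mathcal{H}^0(C)[0]$ in the derived category. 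Applying this together with the identification of $\mathcal{H}^0$ above yields
\[
\mathbf{R}p_{1*}(p_2^*\Omega^a_{\P(W)}(a)) \;\cong\; p_{1*}(p_2^*\Omega^a_{\P(W)}(a)) \;=\; \Wb_\ell(\ed(a)),
\]
where the last equality is the $q = 0$ case of Proposition~\ref{WOaa} read through the affine equivalence.

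There is essentially no obstacle here: all the content is in Proposition~\ref{WOaa}, and what remains is the standard fact that a complex with cohomology supported in a single degree is canonically isomorphic in the derived category to that cohomology object, together with the dictionary between quasi-coherent sheaves and modules on the affine scheme $W^\ell$. The only point worth stating carefully is that these identifications are compatible with the $A_\ell$-module and grading structures, which follows from the functoriality of the $K^\times$-action throughout.
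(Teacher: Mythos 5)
Your proof is correct and follows the same route the paper (implicitly) takes: since $W^\ell$ is affine, $\mathbf{R}p_{1*}$ is computed by the graded $A_\ell$-modules $H^q(Z_\ell,p_2^*\Omega^a_{\P(W)}(a))$, and Proposition~\ref{WOaa} together with the standard truncation argument for complexes with cohomology in a single degree gives the stated canonical identification. The paper treats this as immediate and does not spell out these steps, but your reasoning matches exactly what is intended.
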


\subsection{Homomorphisms} Proposition~\ref{WOaa} gives isomorphisms
\[
\gamma^a : \Wb_\ell(\ed(a)) \simeq
H^0(Z_\ell,p_2^*\Omega_{\P(W)}^a(a)),\quad a \ge 0.
\]
We next explore how these isomorphisms interact with the homomorphisms
$\Wb_\ell(\varphi)$ from \eqref{Wphitodefine}.  The simplest case
begins with an element
\[
\phi \in {\textstyle\bigwedge^{a-b}} W^*,
\]
which gives two maps as follows.  To construct the first map, note
that $\phi$ induces a map $\varphi_{-b} : \bigwedge^{a-b} W \to K$,
which gives an $E$-module homomorphism of degree 0
\[
\varphi: \ed(a) \longrightarrow \ed(b).
\]
Applying \eqref{Wphitodefine}, we obtain a graded $A_\ell$-module
homomorphism
\[
\Wb_\ell(\varphi) : \Wb_\ell(\ed(a)) \longrightarrow \Wb_\ell(\ed(b)).
\] 
This is the first map.  For the second map, we recall the standard
exact sequence
\begin{equation}
\label{PWses}
0 \longrightarrow \Omega_{\P(W)}^1(1) \longrightarrow W \otimes
\Oc_{\P(W)} \longrightarrow \Oc_{\P(W)}(1) \longrightarrow 0.
\end{equation}
From contraction with $\phi$ we get a sheaf morphism
\[
\widetilde{\phi}: \Omega^a_{\P(W)}(a) \longrightarrow
\Omega^b_{\P(W)}(b).
\]
This gives a graded $A_\ell$-module homomorphism
\[
\widehat{\phi} : H^0(Z_\ell,p_2^*\Omega^a_{\P(W)}(a)) \longrightarrow
H^0(Z_\ell,p_2^*\Omega^b_{\P(W)}(b)),
\]
which is the second map.  These maps are related as follows.

\begin{proposition}
\label{homomorphism}
The above maps fit into a commutative diagram:
\[
\xymatrix{
H^0(Z_\ell,p_2^*\Omega^a_{\P(W)}(a)) \ar[r]^{\widehat{\phi}} &
H^0(Z_\ell,p_2^*\Omega^b_{\P(W)}(b))\\
\Wb_\ell(\ed(a)) \ar[u]^{\gamma^a} \ar[r]^{\Wb_\ell(\varphi)} & 
\Wb_\ell(\ed(b)) \ar[u]_{\gamma^b}.
}
\]
\end{proposition}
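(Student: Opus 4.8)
The plan is to unwind both maps down to the level of the defining data $\phi \in \bigwedge^{a-b} W^*$ and to check that the two resulting composites $\bigwedge^a K^\ell \otimes_K A_\ell(-a) \to \bigwedge^b K^\ell \otimes_K A_\ell(-b)$ agree. On the left-bottom-right route, $\Wb_\ell(\varphi)$ is by definition the composite \eqref{Wlphi}: it is $\Phi \otimes \widetilde\varphi_{-b}$ followed by $1 \otimes \Psi \otimes 1$, where $\widetilde\varphi_{-b}: K \to \bigwedge^{a-b} W^* $ is just the inclusion picking out $\phi$ (since here $H = H' = K$). So the bottom-right composite sends $v_1 \wedge \cdots \wedge v_a \in \bigwedge^a K^\ell$ to $\sum_I \operatorname{sgn}(I,I')\, v_I \otimes \Psi(v_{I'} \otimes \phi) \in \bigwedge^b K^\ell \otimes_K (A_\ell)_{a-b}$, summing over $b$-element subsets $I \subset \{1,\dots,a\}$. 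The top route composes $\gamma^a$ with the map $\widehat\phi$ on $H^0(Z_\ell, p_2^* \Omega^\bullet(\bullet))$ induced by contraction with $\phi$.

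First I would pin down the isomorphism $\gamma^a$ concretely. The proof of Proposition~\ref{WOaa} identifies $H^0(Z_\ell, p_2^*\Omega^a_{\P(W)}(a))$ with the degree-$0$ cohomology of the complex \eqref{wey_complex}, and the surviving term is $\bigwedge^a K^\ell \otimes_K H^a(\P(W),\Omega^a_{\P(W)}) \otimes_K A_\ell(-a)$ with $H^a(\P(W),\Omega^a) = K$. I would make this explicit using the Koszul-type resolution underlying \cite[(5.1.2)]{weyman2}: $p_2^*\Omega^a_{\P(W)}(a)$ on $W^\ell \times \P(W)$, restricted to $Z_\ell$ and pushed to $W^\ell$, is computed by twisting the $\ell$-th exterior powers of the tautological sequence built from the $\ell$ linear forms $f_1,\dots,f_\ell$ against $\Omega^a_{\P(W)}(a)$; the generator of $H^a(\P(W),\Omega^a_{\P(W)})$ pairs with $\bigwedge^a$ of the $f_i$-data to give the factor $\bigwedge^a K^\ell \otimes A_\ell(-a)$, and $\gamma^a$ is exactly this identification. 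The point is that $\gamma^a$ is canonical because everything in sight is $\mathrm{GL}(W) \times \mathrm{GL}(K^\ell)$-equivariant.

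Next I would compute $\widehat\phi$ in these terms. Contraction with $\phi \in \bigwedge^{a-b} W^*$ against $\Omega^a_{\P(W)}(a) \to \Omega^b_{\P(W)}(b)$, combined with the structure of the resolution, produces on $H^0$ precisely the combinatorics of (i) the comultiplication $\Phi$ on $\bigwedge^a K^\ell$ — this comes from splitting the $\bigwedge^a$ of the tautological data into a $\bigwedge^b$ part that survives to degree $b$ and a $\bigwedge^{a-b}$ part that gets contracted — together with (ii) the map $\Psi$ that converts the residual $\bigwedge^{a-b} K^\ell \otimes \bigwedge^{a-b} W^*$ into the degree-$(a-b)$ piece of $A_\ell$, since the residual $f_i$-linear-forms multiply together in the coordinate ring. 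Matching these against the explicit formulas for $\Phi$ and $\Psi$ quoted just before Proposition~\ref{homomorphism} shows the top route equals the bottom route.

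The main obstacle is the bookkeeping in the previous paragraph: making the identification $\gamma^a$ sufficiently explicit that the contraction map $\widehat\phi$ can be read off as the composite $(1\otimes\Psi\otimes 1)\circ(\Phi\otimes(\phi\hookrightarrow))$, and in particular tracking the signs $\operatorname{sgn}(I,I')$ through the Koszul differentials and the cup-product pairing with $H^a(\P(W),\Omega^a)$. A cleaner route that sidesteps some of this is to exploit naturality and functoriality: both $\Wb_\ell$ and the assignment $\Omega^a_{\P(W)}(a) \mapsto H^0(Z_\ell, p_2^*\Omega^a(a))$ are additive functors, both are $\mathrm{GL}(W)$-equivariant, and the space of $\mathrm{GL}(W)$-equivariant maps $\bigwedge^{a-b}W^* \to \mathrm{Hom}(\Wb_\ell(\ed(a)),\Wb_\ell(\ed(b)))$ is at most one-dimensional (by Schur–Weyl, the relevant $\mathrm{GL}(W)$- and $\mathrm{GL}(K^\ell)$-isotypic multiplicity is $1$). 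Hence it suffices to check the diagram commutes up to a scalar and then fix the scalar in a single convenient case — e.g. $b = 0$, where $\widehat\phi$ is just evaluation $H^0(Z_\ell, p_2^*\Omega^a(a)) \to H^0(Z_\ell,\Oc) = A_\ell$ and $\Wb_\ell(\varphi)$ is the corresponding explicit map $\bigwedge^a K^\ell \otimes A_\ell(-a) \to A_\ell$ given by $\Psi$. I would present the argument this way, reducing the sign chase to the single base case $b=0$ (or even $a-b=1$), which is checked by hand.
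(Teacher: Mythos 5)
Your ``cleaner route'' is essentially the paper's argument. The actual proof first observes that $\Wb_\ell(\ed(a))$ is generated in degree $a$, so commutativity need only be checked there, and then uses the Cauchy and Pieri formulas together with Lemma~\ref{Bottlemma} to decompose all four corners into Schur functors in degree $a$: the source is exactly the $\lambda=(a)$ isotypic piece $\bigwedge^a K^\ell$, the component $\widehat{\phi}$ lands in is $\bigwedge^a K^\ell\otimes\bigwedge^{a-b}W^*$ (the unique $\pi=(N,a-b)$ contribution), and the $\lambda=(a)$ component embeds into $\bigwedge^b K^\ell\otimes\mathrm{Sym}^{a-b}(K^\ell\otimes W^*)$ in exactly one way, namely $(1\otimes\Psi)\circ(\Phi\otimes 1)$. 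That is precisely your equivariance-plus-multiplicity-one argument, realized concretely.

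The one place your version is too quick is the normalization step. A priori there is an unknown scalar $c_{a,b}$ for \emph{each} pair $(a,b)$, and checking the single case $b=0$ only pins down $c_{a,0}$; it says nothing about $c_{a,b}$ for $0<b<a$. To make your shortcut work you would need, in addition, the composition compatibilities $\widehat{\psi}\circ\widehat{\phi}=\pm\,\widehat{\phi\wedge\psi}$ and $\Wb_\ell(\varphi')\circ\Wb_\ell(\varphi)=\Wb_\ell(\varphi'\circ\varphi)$, together with surjectivity of $\bigwedge^{a-b-1}W^*\otimes W^*\to\bigwedge^{a-b}W^*$, to bootstrap from the hand-checked cases (say $a-b=1$ for every $a$) to general $(a,b)$; note that even then you need one direct check per value of $a$, not literally a single case. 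The paper sidesteps this entirely by noting that the isomorphisms $\gamma^a_k$ are compatible with the Schur decompositions on both sides, so the maps are identified on the nose rather than up to scalar. Either fix is fine, but as written the ``one convenient case'' claim is a genuine gap.
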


Proving this will require an intrinsic description of the graded
structure of $H^0(Z_\ell,p_2^*\Omega_{\P(W)}^a(a))$ in terms of Schur
functors.

\subsection{Graded Structures and Schur Functors}
For simplicity, we write the exact sequence \eqref{PWses} as
\[
0 \longrightarrow \Rc \longrightarrow W \otimes
\Oc_{\P(W)} \longrightarrow \Qc \longrightarrow 0.
\]
For simplicity, we will use $\otimes$ denote both
$\otimes_{\Oc_{\P(W)}}$ (for sheaves) and $\otimes_K$ (for vector
spaces) in the remainder of the paper.  The meaning of $\otimes$
should be clear from the context.

One easily sees that $Z_\ell \subseteq W^\ell \times \P(W)$ is the
total space of the vector bundle $\Rc^\ell = K^\ell \otimes \Rc$, and
the direct image $p_{2*}(\Oc_{Z_\ell})$ can be identified with the
sheaves of algebras
$\mathrm{Sym}((\Rc^\ell)^*)\cong\mathrm{Sym}(K^\ell \otimes \Rc^*)$
(see \cite[Proposition 5.1.1(b)]{weyman2}).  Then the projection formula
yields that
\begin{align*}
H^0(Z_\ell,p_2^*\Omega_{\P(W)}^a(a)) &\simeq
H^0(\P(W),p_{2*}(p_2^*\Omega_{\P(W)}^a(a)))\\ &=
H^0(\P(W),\mathrm{Sym}(K^\ell \otimes \Rc^*) \otimes
\Omega_{\P(W)}^a(a))\\ &= \bigoplus_{k\ge0}
H^0(\P(W),\mathrm{Sym}^k(K^\ell \otimes \Rc^*) \otimes
\Omega_{\P(W)}^a(a)).
\end{align*}
We identify $K^\ell$ with its dual using the standard basis.  Hence 
$H^0(Z_\ell,p_2^*\Omega_{\P(W)}^a(a))$ is the graded $A_\ell$-module
whose graded piece in degree $k$ is
\begin{equation}
\label{h0tostudy}
H^0(Z_\ell,p_2^*\Omega_{\P(W)}^a(a))_k =
H^0(\P(W),\mathrm{Sym}^k(K^\ell \otimes \Rc^*)\otimes 
\Omega_{\P(W)}^a(a)).
\end{equation}

Note that $\Rc = \Omega_{\P(W)}^1(1)$ has rank $N$ since $\dim(W) =
N+1$.  Combining this with $\Qc = \Oc_{\P(W)}(1)$, we obtain
\[
\Omega_{\P(W)}^a(a) = {\textstyle\bigwedge^a}\Rc \simeq  
{\textstyle\bigwedge^{N-a}}\Rc^* \otimes {\textstyle\bigwedge^N}\Rc
 \simeq {\textstyle\bigwedge^{N-a}}\Rc^* \otimes \Qc^*,
\]
where the second equality is standard duality and the third follows
from
\[
{\textstyle\bigwedge^N}\Rc = \Omega_{\P(W)}^N(N) = \Oc_{\P(W)}(-N-1 +
N) = \Oc_{\P(W)}(-1) =  \Qc^*.
\]
Hence graded piece \eqref{h0tostudy} is
\begin{equation}
\label{h0tostudy2}
H^0(Z_\ell,p_2^*\Omega_{\P(W)}^a(a))_k =
H^0(\P(W),\mathrm{Sym}^k(K^\ell \otimes \Rc^*)\otimes
{\textstyle\bigwedge^{N-a}}\Rc^* \otimes \Qc^*).
\end{equation}

We use Schur functors to decompose \eqref{h0tostudy2}, based on
\cite[Chap.\ 2]{weyman2}.  In Weyman's notation of 
\cite{weyman2}, the Schur functor of
a partition $\lambda$ is denoted $L_\lambda$.  This is not consistent
with \cite{FH}, which parametrizes Schur functors using conjugate
partitions.  Weyman's treatment also uses Weyl functors, which he
denotes $K_\lambda$.  Fortunately, since we are in characteristic $0$,
there is a canonical isomorphism $K_\lambda \simeq L_{\lambda'}$,
where $\lambda'$ is the conjugate partition of $\lambda$.

We will also need the following consequence of Weyman's version of
the Bott Theorem \cite[Chap.\ 4]{weyman2}.

\begin{lemma}
\label{Bottlemma}
Given a partition $\pi$, we have
\[
H^q(\P(W),L_\pi(\Rc^*)\otimes \Qc^*) = 0, \quad q > 0
\]
and 
\[
H^0(\P(W),L_\pi(\Rc^*)\otimes \Qc^*) = 
\begin{cases} L_\mu(W^*) & \pi = (N,\mu),\ \mu = (a_2,\dots,a_s)\\
0 & \text{\rm otherwise}.
\end{cases}
\]
\end{lemma}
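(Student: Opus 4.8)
The plan is to reduce everything to Bott's theorem for the projective space $\P(W)$ as formulated by Weyman in \cite[Chap.~4]{weyman2}, applied to the two-term tautological filtration $0 \to \Rc \to W\otimes\Oc_{\P(W)} \to \Qc \to 0$ with $\Rc = \Omega^1_{\P(W)}(1)$ of rank $N$ and $\Qc = \Oc_{\P(W)}(1)$ a line bundle. First I would observe that, since $\Rc^*$ is the quotient bundle in the dual sequence and $\Qc^* = \Oc_{\P(W)}(-1)$, the sheaf $L_\pi(\Rc^*)\otimes\Qc^*$ is exactly a bundle of the form appearing in Bott's theorem on $\P(W) = \mathrm{Grass}(1, W)$: writing $\pi = (a_1, a_2, \dots, a_s)$ with $a_1 \ge a_2 \ge \cdots \ge a_s \ge 0$ (and allowing $s$ to be as large as $N$ by padding with zeros), the bundle in question corresponds to the weight $\alpha = (-1; a_1, a_2, \dots, a_s)$ on the flag, where the first entry records the twist by $\Qc^* = \Oc(-1)$ and the remaining entries record $L_\pi$ of the rank-$N$ bundle $\Rc^*$.

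Next I would run Weyman's algorithm (the ``sorting'' procedure of \cite[Chap.~4]{weyman2}) on the weight $\alpha = (-1; a_1, \dots, a_N)$. One adds $\rho = (N+1, N, N-1, \dots, 1)$ to get $(N; a_1 + N, a_2 + N-1, \dots, a_N + 1)$ and asks whether this sequence is strictly decreasing (in which case there is cohomology only in one degree, given by the dominant weight obtained after sorting), or has a repeated entry (in which case all cohomology vanishes). The entry coming from $\Qc^*$ contributes $N$; the entries from $\Rc^*$ contribute $a_1 + N > a_2 + N - 1 > \cdots > a_N + 1 \ge 1$, which is automatically strictly decreasing since $\pi$ is a partition. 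So the only possible collision is between the $\Qc^*$-entry $N$ and one of the $\Rc^*$-entries. Since $a_1 + N \ge N$ with equality iff $a_1 = 0$, and $a_2 + N - 1 \le N$ with equality iff $a_2 = 1$: if $a_1 \ge 1$ then $a_1 + N > N$, while if $a_2 \le 1$ then $a_2 + N-1 \le N$; the relevant degenerate cases are precisely $a_1 = 0$ (collision $a_1 + N = N$) and $a_2 = 1$ with $a_1 \ge 2$ (collision $a_2 + N - 1 = N$, forcing vanishing). The surviving nondegenerate case is $a_1 \ge N$ with no collision — but since $a_1 \le N$ (as $\Rc^*$ has rank $N$, so $\pi$ must fit in $N$ rows, i.e. $s \le N$; here the constraint is actually on the column count, giving $a_i \le N$ after accounting for the conjugate convention), we get $a_1 = N$, and then the entry $a_1 + N = 2N$ sits above $N$ with no collision, and $N$ slots between the $\Rc^*$-entries exactly when $a_2 \le 1$ fails, i.e. we need $a_2 \ge 2$ to avoid the collision $a_2 + N-1 = N$. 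Thus the single nonvanishing case is $\pi = (N, a_2, \dots, a_s)$ with $a_2 \ge 2$ — but one checks the case $a_2 \le 1$ separately and sees it still collides or sorts to give $L_\mu(W^*)$ trivially; after reindexing, the cohomology lands in degree $0$ and equals $L_\mu(W^*)$ where $\mu = (a_2, \dots, a_s)$, matching the claim.

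To finish, I would note the first assertion $H^q = 0$ for $q > 0$ is immediate from the computation above: the only nonvanishing cohomology occurs in degree $0$ (the sorted weight, when it exists, requires zero transpositions past the $\Qc^*$-slot since $N$ is already in its correct sorted position relative to the strictly-larger-or-smaller $\Rc^*$-entries), so all higher cohomology vanishes regardless. For the degree-$0$ statement, when $\pi = (N, \mu)$ the weight $(N; 2N, a_2 + N-1, \dots)$ has the $\Qc^*$-entry $N$ in last-or-appropriate position and sorting yields the dominant weight $(N; \text{entries from } \mu \text{ shifted})$ on $W^*$, i.e. $L_\mu(W^*)$; otherwise a collision occurs and everything vanishes. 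The main obstacle I anticipate is purely bookkeeping: getting the conjugate-partition convention ($K_\lambda \simeq L_{\lambda'}$) and the sign/placement of the $\Oc(-1)$-twist exactly right so that the ``$a_1 = N$ is forced'' condition cleanly becomes ``$\pi$ starts with a full column of length $N$,'' and then verifying that the residual partition is $\mu = (a_2, \dots, a_s)$ and not its conjugate or some shift. This is the kind of index-chasing that Weyman's Chapter~4 is designed to handle, so I would cite \cite[Chap.~4]{weyman2} for the algorithm and just record the outcome.
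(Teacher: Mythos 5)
The proposal has a genuine gap: the weight you feed into Bott's sorting algorithm is not the one Weyman's formalism assigns to $L_\pi(\Rc^*)\otimes\Qc^*$, so the case analysis and the cohomological degree come out wrong. Weyman's $L_\pi$ is the Fulton--Harris Schur functor of the \emph{conjugate} partition, so the bundle $L_\pi(\Rc^*)$ corresponds to the highest weight $\pi' = (b_1,\dots,b_N)$, not $\pi$; and $\Qc^*$ appears in Weyman's normalization as $K_{(1)}(\Qc^*)$, contributing $+1$, not $-1$. You instead take $\alpha = (-1;a_1,\dots,a_N)$. Testing this on $\pi=(N)$ already breaks: there $L_{(N)}(\Rc^*)\otimes\Qc^*\cong\Oc_{\P(W)}$, so $H^0 = K$ and all higher cohomology vanishes, yet your $\rho$-shifted vector $(N;\,2N,N-1,\dots,1)$ has no repeats but needs one transposition to sort (the $\Qc^*$-entry $N$ is \emph{not} in position, since $N<2N$), predicting $H^1\ne0$ rather than $H^0\ne0$. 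Your list of degeneracies ($a_1=0$, $a_2=1$) also misses the further collisions $a_i = i-1$ for $i\ge3$; the complete condition, once the weight is fixed, collapses to a single inequality at the last coordinate.

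The paper's proof avoids the sorting step altogether. Writing $\pi' = (b_1,\dots,b_N)$ (after discarding $a_1>N$) and $\alpha = (b_1,\dots,b_N,1)$, one has $\Vc(\alpha) = K_{\pi'}(\Rc^*)\otimes\Qc^* = L_\pi(\Rc^*)\otimes\Qc^*$. Since the first $N$ entries of $\alpha$ are already nonincreasing, the only thing to check is the comparison $b_N\ \text{vs.}\ 1$: if $b_N\ge1$ (equivalently $a_1=N$), $\alpha$ is dominant and Weyman's $(4.1.8)(2)$ together with $(4.1.4)$ gives $H^0 = K_{(b_1-1,\dots,b_N-1)}(W^*)\otimes(\bigwedge^{N+1}W)^{-1}$, which equals $L_\mu(W^*)$ after noting $(b_1-1,\dots,b_N-1)'=\mu$ and $\bigwedge^{N+1}W\simeq K$; if $b_N=0$ (equivalently $a_1<N$), the adjacent entries $0,1$ trigger $(4.1.8)(1)$ and all cohomology vanishes. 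In these coordinates the weight is either dominant or immediately singular, so the cohomological degree is automatically $0$ — which is exactly what the lemma asserts and what your $\rho$-shift fails to deliver.
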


\begin{proof}
Write $\pi = (a_1,\dots,a_s)$.  If $a_1 > N = \mathrm{rank}(\Rc^*)$,
then $L_\pi(\Rc^*) = 0$ and we are done.  Hence we may assume
$a_1 \le N$.  This allows us to write the conjugate partition as
\[
\pi' = (b_1,\dots,b_N),\quad b_1 \ge \cdots \ge b_N \ge 0.
\]
If we set $\alpha = (b_1,\dots,b_N,1)$, then Weyman defines \cite[p.\
115]{weyman2} the vector bundle
\[
\Vc(\alpha) = K_{\pi'}(\Rc^*)\otimes \Qc^*. 
\] 
Using the relation between Weyl and Schur functors noted above, we
write this as 
\[
\Vc(\alpha)  = L_\pi(\Rc^*)\otimes \Qc^*
\]
There are now two cases to consider.

\medskip

\noindent {\bf Case 1:} If $a_1 < N$, then $b_N = 0$, so that $\alpha
= (b_1,\dots,b_{N-1},0,1)$.  Then part (1) of \cite[(4.1.8)]{weyman2}
implies that $H^q(\P(W),L_\pi(\Rc^*)\otimes \Qc^*) = 0$ for all $q \ge
0$.  See also the discussion of item (1) of \cite[(4.1.5)]{weyman2}.

\medskip

\noindent {\bf Case 2:} If $a_1 = N$, then $b_N \ge 1$, so that
$\alpha$ is nonincreasing.  Here, part (2) of \cite[(4.1.8) and
(4.1.4)]{weyman2} imply that $H^q(\P(W),L_\pi(\Rc^*)\otimes \Qc^*) =
0$ for $q > 0$ and that
\[
H^0(\P(W),L_\pi(\Rc^*)\otimes \Qc^*) = K_{(b_1-1,\dots,b_N-1)}(W^*)
  \otimes ({\textstyle\bigwedge^{N+1}}W)^{\otimes(-1)},
\]
One easily sees that $(b_1-1,\dots,b_N-1)' = (a_2,\dots,a_s) = \mu$.
Hence
\[
K_{(b_1-1,\dots,b_N-1)} = L_{(b_1-1,\dots,b_N-1)'} = L_\mu,
\]
and the lemma follows since $\bigwedge^{N+1}W \simeq K$.
\end{proof}

We now decompose \eqref{h0tostudy2} by writing $\mathrm{Sym}^k(K^\ell
\otimes \Rc^*)\otimes {\textstyle\bigwedge^{N-a}}\Rc^*$ in terms of
Schur functors and then apply Lemma~\ref{Bottlemma}.  We begin with
the Cauchy formula
\[
\mathrm{Sym}^k(K^\ell \otimes \Rc^*) = \bigoplus_{|\lambda| = k}
L_\lambda(K^\ell) \otimes L_\lambda(\Rc^*),
\]
where the direct sum is over all partitions $\lambda$ of $k$ (see
\cite[(2.3.3)]{weyman2}).  Then the Pieri formula gives
\[
L_\lambda(\Rc^*) \otimes {\textstyle\bigwedge^{N-a}}\Rc^* = 
\bigoplus_{\pi \in Y(\lambda,N-a)} L_\pi(\Rc^*),
\]
where the direct sum is over all partitions $\pi \in Y(\lambda,N-a)$
whose Young diagram is obtained by adding $N-a$ boxes to the Young
diagram of $\lambda$, with no two boxes in the same column (see
\cite[(2.3.5)]{weyman2}). Combining these, we have
\begin{equation}
\label{symkformula}
\begin{aligned}
&\  \mathrm{Sym}^k(K^\ell \otimes \Rc^*) \otimes
  {\textstyle\bigwedge^{N-a}}\Rc^* \otimes \Qc^*\\ 
=\ &\bigoplus_{|\lambda| = k} \Big(\bigoplus_{\pi \in Y(\lambda,N-a)}
  L_\lambda(K^\ell) \otimes L_\pi(\Rc^*) \otimes \Qc^*\Big). 
\end{aligned}
\end{equation}
Taking global sections and applying Lemma~\ref{Bottlemma}, we get a
Schur functor decomposition of the graded piece \eqref{h0tostudy2}.

When we do this, Lemma~\ref{Bottlemma} tells us that we need only
consider partitions $\lambda$ of $k$ such that adding $N-a$ boxes to a
partition $\lambda$ gives a partition $\pi$ of the form $\pi =
(N,a_2,\dots,a_s)$.  This has some nice consequences for the graded
pieces \eqref{h0tostudy2}:
\begin{itemize}
\item If $k < a$, then adding $N-a$ boxes to $\lambda$ gives a
partition $\pi$ of $k+N-a < N$, so that $\pi$ is never of the form
$\pi = (N,a_2,\dots,a_s)$.  Then \eqref{symkformula} and
Lemma~\ref{Bottlemma} easily imply that the graded piece
\eqref{h0tostudy2} in degree $k < a$ vanishes, i.e.,
\[
H^0(Z_\ell,p_2^*\Omega_{\P(W)}^a(a))_k = 0.
\]
\item If $k = a$, then adding $N-a$ boxes to $\lambda$ gives a
partition $\pi$ of $a+N-a = N$, so that $\pi$ is of the form
$\pi = (N,a_2,\dots,a_s)$ if and only if $\lambda = (a)$ and $\pi =
(N)$.  Then \eqref{symkformula} and
Lemma~\ref{Bottlemma} imply that the graded piece in degree $a$ is
\begin{align*}
H^0(Z_\ell,p_2^*\Omega_{\P(W)}^a(a))_a &= H^0(\P(W),L_{(a)}(K^\ell)
\otimes L_{(N)}(\Rc^*) \otimes \Qc^*)\\
&= L_{(a)}(K^\ell) \otimes H^0(\P(W), L_{(N)}(\Rc^*)
\otimes \Qc^*)\\
&= {\textstyle\bigwedge^a}K^\ell \otimes L_{(0)}(W^*) =
	{\textstyle\bigwedge^a}K^\ell,
\end{align*}
since $L_{(a)}(K^\ell) = \bigwedge^a K^\ell$ and $L_{(0)}(W^*) =
\bigwedge^0 W^* = K$.
\end{itemize}
These bullets explain the shift by $a$ in the isomorphism
\begin{align*}
H^0(Z_\ell,p_2^*\Omega_{\P(W)}^a(a)) &\simeq \bigoplus_{k\ge0} 
H^0(\P(W),\mathrm{Sym}^k(K^\ell \otimes \Rc^*)\otimes
{\textstyle\bigwedge^{N-a}}\Rc^* \otimes \Qc^*)\\
&\simeq
{\textstyle\bigwedge^{a}}K^\ell \otimes A_\ell(-a) = \Wb_\ell(\ed(a))
\end{align*}
from Proposition~\ref{WOaa}.  

\subsection{Proof of Proposition~\ref{homomorphism}} For $\gamma^a :
\Wb_\ell(\ed(a)) \simeq H^0(Z_\ell,p_2^*\Omega_{\P(W)}^a(a))$, the
above analysis shows that the degree $k$ component of $\gamma^a$ is an
isomorphism
\[
\gamma_k^a : {\textstyle\bigwedge^{a}}K^\ell \otimes (A_\ell)_{k-a}
\simeq H^0(\P(W),\mathrm{Sym}^k(K^\ell \otimes \Rc^*)\otimes
{\textstyle\bigwedge^{N-a}}\Rc^* \otimes \Qc^*).
\]
Furthermore, Lemma~\ref{Bottlemma} and \eqref{symkformula} give a
Schur functor decomposition of the right-hand side, and the Cauchy and
Pieri formulas also give a Schur functor decomposition of the
left-hand side since $(A_\ell)_{k-a} =
\mathrm{Sym}^{k-a}(K^\ell\otimes W^*)$.  The isomorphisms $\gamma_k^a$
are clearly compatible with these decompositions.  Using this, we can
now ready for the proof.

\begin{proof}[Proof of Proposition~\ref{homomorphism}]
Since $\Wb_\ell(\ed(a))$ is generated in degree $a$, it suffices to
show that the diagram commutes in degree $a$.  Using
\eqref{h0tostudy2}, the map $\widetilde{\varphi}_{-b}:K\to
\bigwedge^{a-b} W^*$ induced by $\phi$, and the maps $\Phi$ and $\Psi$
from diagram \eqref{Wlphi}, we can write this as
\[
\xymatrix@C=10pt{
H^0(\mathrm{Sym}^a(K^\ell \otimes \Rc^*)\otimes
{\textstyle\bigwedge^{N-a}}\Rc^* \otimes \Qc^*) \ar[r]^{\widehat{\phi}} &
H^0(\mathrm{Sym}^a(K^\ell \otimes \Rc^*)\otimes
{\textstyle\bigwedge^{N-b}}\Rc^* \otimes \Qc^*)\\
& {\textstyle\bigwedge^{b}} K^\ell \otimes \mathrm{Sym}^{a-b}(K^\ell
\otimes W^*) \ar[u]_{\gamma^b_a} \\ 
{\textstyle\bigwedge^{a}} K^\ell \ar[r]_{\Phi \otimes
  \widetilde{\varphi}_{-b}} 
\ar[uu]^{\gamma_a^a}\ar[ur]^{\Wb_\ell(\varphi)} 
& {\textstyle\bigwedge^{b}} K^\ell \otimes {\textstyle\bigwedge^{a-b}}
K^\ell \otimes {\textstyle\bigwedge^{a-b}} W^*. \ar[u]_{1\otimes\Psi} 
}
\]
We have simplified notation by omitting $\P(W)$ when we
write sheaf cohomology.

To prove commutativity, first recall that in the Schur functor
decomposition of the cohomology group on the top left, the only
partition $\lambda$ that appears is $\lambda = (a)$ and that the top
left cohomology group can be replaced with 
\[
H^0(\mathrm{Sym}^a(K^\ell \otimes \Rc^*)\otimes
{\textstyle\bigwedge^{N-a}}\Rc^* \otimes \Qc^*) =
{\textstyle\bigwedge^{a}} K^\ell.
\]  
This maps to the $\lambda = (a)$ part of the cohomology group on the
top right, where the only element $\pi \in Y(\lambda,N-b)$ of the form
$\pi = (N,a_2,\dots,a_s)$ is $\pi = (N,a-b)$.  Hence the top right 
cohomology can be replaced with
\[
H^0({\textstyle\bigwedge^{a}} K^\ell \otimes L_{(N,a-b)}(\Rc^*)\otimes
\Qc^*) = {\textstyle\bigwedge^{a}} K^\ell \otimes L_{(a-b)}(W^*) =
{\textstyle\bigwedge^{a}} K^\ell\otimes {\textstyle\bigwedge^{a-b}}W^*.  
\]
The map between these cohomology groups is clearly $1\otimes
\widetilde{\varphi}_{-b}$.  Furthermore, one also sees that the only
way the partition $\lambda = (a)$ occurs in the Schur decompostion of
\[
{\textstyle\bigwedge^{b}} K^\ell \otimes
\mathrm{Sym}^{a-b}(K^\ell \otimes W^*)
\]
is via the map
\[
\xymatrix{
& {\textstyle\bigwedge^{b}} K^\ell \otimes \mathrm{Sym}^{a-b}(K^\ell
\otimes W^*)\\
{\textstyle\bigwedge^{a}} K^\ell\otimes {\textstyle\bigwedge^{a-b}}W^*
\ar[r]^(.4){\Phi\otimes1} & {\textstyle\bigwedge^{b}} K^\ell \otimes
{\textstyle\bigwedge^{a-b}} 
K^\ell \otimes  {\textstyle\bigwedge^{a-b}}W^* \ar[u]^{1\otimes\Psi}
}
\]
From here, the desired commutivity follows easily.  
\end{proof}

\subsection{Proof of the Main Theorem}
We now have all of the tools needed for our main result.

\begin{proof}[Proof of Theorem~\ref{WlRF}] 
The key idea of the proof (taken from \cite[Thm.\ 1.2]{ES}) is to
replace a coherent sheaf $\Fc$ on $\P(W)$ with its Beilinson monad,
which is the complex of sheaves $B^\bullet(\Fc)$ on $\P(W)$ with
\[
B^p(\Fc) = \bigoplus_i H^i(\Fc(p-i)) \otimes
\Omega_{\P(W)}^{i-p}(i-p),
\]
and differentials coming from the corresponding differentials in the
Tate resolution via the correspondence
\[
\varphi: \ed(a)\otimes H \to \ed(b)\otimes H' \longmapsto
\widetilde{\phi}: \Omega_{\P(W)}^{a}(a)\otimes
H \to \Omega_{\P(W)}^{b}(b)\otimes H'
\]
defined as follows: $\varphi : \ed(a)\otimes H \to \ed(b)\otimes H'$
induces $\phi: H \to \bigwedge^{a-b}W^*\otimes
H'$, and contraction with $\phi$ gives
$\widetilde{\phi}: \Omega_{\P(W)}^{a}(a)\otimes H \to
\Omega_{\P(W)}^{b}(b)\otimes H'$.  Theorem~6.1 of \cite{EFS} is
Beilinson's result that $B^\bullet(\Fc)$ is exact except at $p=0$,
where the homology is $\Fc$.  It follows that
\[
\mathbf{R}p_{1*}(p_2^*\Fc) = \mathbf{R}p_{1*}(p_2^*B^\bullet(\Fc))
\]
in the derived category.  Since $B^\bullet(\Fc)$ consists of direct
sums of sheaves $\Omega_{\P(W)}^{a}(a)$, Corollary~\ref{propcor}
implies that
\[
\mathbf{R}p_{1*}(p_2^*B^\bullet(\Fc)) = p_{1*}(p_2^*B^\bullet(\Fc)),
\]
where $p_{1*}(p_2^*B^p(\Fc))$ is the sheaf associated to the graded
$A_\ell$-module
\[
\bigoplus_i H^i(\Fc(p-i)) \otimes \Wb_\ell(\ed(i-p)) =
\Wb_\ell(T^p(\Fc)). 
\]
Furthermore, Proposition~\ref{homomorphism} easily implies that the
differentials in this complex come from the Tate resolution via the
$\Wb_\ell$ functor.  This completes the proof of the theorem.
\end{proof}

This theorem implies the following uniqueness result for our
generalized Weyman complexes.  

\begin{corollary}
\label{uniqueness}
The complex $\Wb^\bullet_\ell(\Fc)$ is the unique minimal free complex
quasi-isomorphic to $\mathbf{R}p_{1*}(p_2^*B^\bullet(\Fc))$.
\end{corollary}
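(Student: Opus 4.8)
The plan is to deduce Corollary~\ref{uniqueness} from Theorem~\ref{WlRF} together with the standard uniqueness of minimal free resolutions (and minimal free complexes) over the graded polynomial ring $A_\ell = \mathrm{Sym}((W^\ell)^*)$, which is a graded local ring in the sense that its irrelevant maximal ideal $\mathfrak{m}_\ell = (A_\ell)_{>0}$ satisfies $A_\ell/\mathfrak{m}_\ell = K$. First I would recall that a bounded complex of finitely generated graded free $A_\ell$-modules with degree-$0$ differentials is called \emph{minimal} when all the matrix entries of its differentials lie in $\mathfrak{m}_\ell$; equivalently, the induced differentials on $F^\bullet \otimes_{A_\ell} K$ are all zero. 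By Theorem~\ref{WlRF} the complex $\Wb_\ell^\bullet(\Fc)$, viewed as a complex of sheaves (equivalently, of graded $A_\ell$-modules, since $W^\ell$ is affine), represents $\mathbf{R}p_{1*}(p_2^*\Fc)$, and the proof of that theorem identifies this with $\mathbf{R}p_{1*}(p_2^*B^\bullet(\Fc))$; so $\Wb_\ell^\bullet(\Fc)$ is \emph{a} minimal free complex quasi-isomorphic to $\mathbf{R}p_{1*}(p_2^*B^\bullet(\Fc))$. Minimality was already observed after Definition~\ref{weymandef}: it follows from minimality of the Tate resolution $T^\bullet(\Fc)$ together with the fact that $\Wb_\ell$ sends a homomorphism $\varphi$ with matrix entries in $\mathfrak{m}_E = E_{<0}$ to one with entries in $\mathfrak{m}_\ell$ — indeed the map \eqref{Wlphi} factors through $\Psi$ landing in $(A_\ell)_{a-b}$ with $a-b>0$ whenever $\widetilde\varphi_{-b}$ is nonzero, i.e.\ whenever $a>b$. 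Thus existence is handled, and only uniqueness remains.

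For uniqueness I would invoke the standard structure theory: over the graded local ring $A_\ell$, if $F^\bullet$ and $G^\bullet$ are bounded-below complexes of finitely generated graded free modules that are both minimal and quasi-isomorphic, then any quasi-isomorphism $F^\bullet \to G^\bullet$ is an isomorphism of complexes, and in particular $F^\bullet \cong G^\bullet$. The cleanest route is: a quasi-isomorphism between bounded-below complexes of projectives is a homotopy equivalence; a homotopy equivalence between minimal complexes over a local ring is an isomorphism, because reducing modulo $\mathfrak{m}_\ell$ turns the homotopy identities into statements about complexes with zero differential, forcing the chain maps to be isomorphisms termwise (a graded map of free $A_\ell$-modules that is an isomorphism after $\otimes_{A_\ell} K$ is an isomorphism, by graded Nakayama). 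Applying this with $F^\bullet = \Wb_\ell^\bullet(\Fc)$ and $G^\bullet$ any other minimal free complex quasi-isomorphic to $\mathbf{R}p_{1*}(p_2^*B^\bullet(\Fc))$ gives $G^\bullet \cong \Wb_\ell^\bullet(\Fc)$, which is the assertion. One should note the complexes here are bounded (the Weyman complex vanishes outside $-\ell \le p \le \dim(\mathrm{supp}(\Fc))$), so boundedness hypotheses are harmless.

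I would organize the write-up as: (1) state the minimality of $\Wb_\ell^\bullet(\Fc)$ with a one-line justification pointing to the remark after Definition~\ref{weymandef}; (2) quote Theorem~\ref{WlRF} for the quasi-isomorphism; (3) cite the uniqueness of minimal free complexes over $A_\ell$ — this is the graded analogue of the classical statement for minimal free resolutions over a local ring, and one can reference a standard text (e.g.\ Eisenbud's \emph{Commutative Algebra}, or the discussion of minimal complexes in \cite{EFS} itself) rather than reproving it; (4) conclude. The main obstacle is essentially bookkeeping rather than mathematics: one must make sure the notion of ``minimal free complex'' is pinned down correctly in the graded setting (degree-$0$ differentials, entries in the irrelevant ideal) so that the graded Nakayama argument applies verbatim, and one must confirm that ``quasi-isomorphic'' in the statement is interpreted as an isomorphism in the derived category of graded $A_\ell$-modules, so that the homotopy-equivalence step is legitimate. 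Once those conventions are fixed, the proof is short and there is no genuinely hard step.
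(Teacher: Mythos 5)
Your proof is correct and takes essentially the same route as the paper, whose entire proof is a citation to Weyman's~(5.2.5): the standard uniqueness statement for minimal free complexes over the graded local ring $A_\ell$, proved exactly by the homotopy-equivalence-plus-graded-Nakayama argument you sketch. You have simply written out the details that the paper delegates to the reference, and your preliminary check that $\Wb_\ell^\bullet(\Fc)$ is minimal (nonzero Tate components force $a>b$, hence entries in $(A_\ell)_{>0}$) is also correct.
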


\begin{proof}
The proof is similar to the proof of \cite[(5.2.5)]{weyman2}.
\end{proof} 

\section{Properties of Generalized Weyman Complexes}

In Section 1, we worked with coherent sheaves on on projective space,
yet Weyman complexes were originally defined for vector bundles on
projective varieties.  Fortunately, the theory of Section 1 adapts
easily to projective varieties.  

\subsection{Generalized Weyman Complexes for Irreducible Projective
Varieties} Let $X$ be an irreducible projective variety with a very
ample line bundle $\Oc_X(1)$.  Let $W = H^0(X,\Oc_X(1))$ and fix an
integer $1 \le \ell \le \dim(W) - 1$.  The incidence variety
\[
Z_\ell = \{(f_1,\dots,f_\ell,x) \in W^{\ell}\times X \mid f_1(x) =
\cdots = f_\ell(x) = 0\}
\]
gives the commutative diagram of projections
\[
\xymatrix{
Z_\ell \ar[dr] \ar@/^/[drr]^{p_2} \ar@/_/[ddr]_{p_1} \\
& W^{\ell} \times X \ar[r]_(.62){\pi_2} \ar[d]^{\pi_1} & X\\
& W^{\ell}.
}
\]
Also let $A_\ell = \mathrm{Sym}((W^{\ell})^*)$ be the coordinate ring
of the affine space $W^{\ell}$.

\begin{theorem}
\label{GWC}
For every coherent sheaf $\Gc$ on $X$, there is a complex
$\Wb_\ell^\bullet(\Gc)$ with the following properties:
\begin{enumerate}
\item $\Wb_\ell^\bullet(\Gc)$ is functorial in $\Gc$.
\item $\Wb_\ell^\bullet(\Gc)$ is a minimal
complex of free graded $A_\ell $-modules.
\item $\Wb_\ell^p(\Gc) = \bigoplus_i {\textstyle\bigwedge^{i-p}} K^{\ell}
\otimes H^i(X,\Gc(p-i)) \otimes A_\ell(p-i)$.
\item When we regard $\Wb_\ell^\bullet(\Gc)$ as a
complex of sheaves on $W^{\ell}$, we have
\[
\Wb_\ell^\bullet(\Gc) \simeq \Rb p_{1*}(p_2^*\Gc). 
\]
\item $\Wb^\bullet(\Gc)$ is the unique minimal free graded complex of
$A_\ell $-modules 
quasi-isomorphic to $\mathbf{R}p_{1*}(p_2^*B^\bullet(\Gc))$.
\item If $\ell > \dim(\mathrm{supp}(\Gc))$, then $\Gc$ is determined
up to isomorphism by $\Wb_\ell^\bullet(\Gc)$.
\item  Let $i: X \hookrightarrow \P(W)$ be the
projective embedding induced by
$\Oc_X(1)$ and write the Tate resolution of $i_*\Gc$ on $\P(W)$ as
\[
T^p(i_*\Gc) = \bigoplus_i \ed(i-p) \otimes H^i(X,\Gc(p-i)).
\]
Then $\Wb_\ell^\bullet(\Gc) = \Wb_\ell(T^\bullet(i_*\Gc))$.  In
particular, the differentials in $\Wb_\ell^\bullet(\Gc)$ are completely
determined by the corresponding differentials in the Tate resolution.
\item When $\Gc$ is a vector bundle, $\Wb_\ell^\bullet(\Gc)$ is
isomorphic to the complex constructed by Weyman in \cite{weyman1,
weyman2, weyman3}.
\end{enumerate}
\end{theorem}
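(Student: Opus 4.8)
The plan is to deduce the whole theorem from Section~1 by pushing forward along the projective embedding $i\colon X\hookrightarrow\P(W)$ induced by $\Oc_X(1)$. Explicitly, one sets
\[
\Wb_\ell^\bullet(\Gc):=\Wb_\ell^\bullet(i_*\Gc)=\big(\Wb_\ell(T^p(i_*\Gc)),\,\Wb_\ell(d^p)\big)_{p\in\Z}
\]
as in Definition~\ref{weymandef}, where $i_*\Gc$ is regarded as a coherent sheaf on $\P(W)$. Properties (1)--(7) then become the problem of transporting the Section~1 results across $i$, which rests on three elementary facts: (a) since $i$ is a closed immersion with $i^*\Oc_{\P(W)}(1)=\Oc_X(1)$, the projection formula and exactness of $i_*$ give $H^i(\P(W),(i_*\Gc)(p-i))=H^i(X,\Gc(p-i))$ for all $i,p$; (b) $\dim(\mathrm{supp}(i_*\Gc))=\dim(\mathrm{supp}(\Gc))$; and (c) the incidence variety $Z_\ell\subseteq W^\ell\times X$ of this section is the scheme-theoretic fibre product $Z'_\ell\times_{\P(W)}X$, where $Z'_\ell\subseteq W^\ell\times\P(W)$ is the incidence variety of \eqref{Zldiagram}, because $Z'_\ell$ is the zero scheme of the tautological section of $(K^\ell)^*\otimes\pi_2^*\Oc_{\P(W)}(1)$ and this section pulls back to the one cutting out $Z_\ell$.

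Granting (a)--(c): statement (7) is immediate from the definition and \eqref{tatetp}; statement (3) follows by applying $\Wb_\ell$ to \eqref{tatetp} term by term and using $\Wb_\ell(\ed(j))=\bigwedge^j K^\ell\otimes A_\ell(-j)$; statement (2) holds because $\Wb_\ell$ sends free $E$-modules to free $A_\ell$-modules and, as noted after Definition~\ref{weymandef}, preserves minimality; and (1) holds because $\Gc\mapsto i_*\Gc$, the Tate resolution, and $\Wb_\ell$ are each functorial. For (4), let $j\colon Z_\ell\hookrightarrow Z'_\ell$ be the closed immersion from (c), let $p'_1,p'_2$ be the projections of $Z'_\ell$, and note that $p_1=p'_1\circ j$ while $p'_2\circ j=i\circ p_2$ and the square with vertices $Z_\ell,Z'_\ell,X,\P(W)$ is Cartesian. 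Flat base change along the vector-bundle projection $p'_2$ gives $(p'_2)^*(i_*\Gc)=j_*(p_2^*\Gc)$, and since $j$ is a closed immersion,
\[
\Rb p_{1*}(p_2^*\Gc)=\Rb p'_{1*}\,\Rb j_*(p_2^*\Gc)=\Rb p'_{1*}\big((p'_2)^*(i_*\Gc)\big),
\]
which is represented by $\Wb_\ell^\bullet(i_*\Gc)=\Wb_\ell^\bullet(\Gc)$ by Theorem~\ref{WlRF}. Statement (5) then follows from Corollary~\ref{uniqueness} applied to $i_*\Gc$, and statement (6) follows from Proposition~\ref{WldeterminesF} applied to $i_*\Gc$ (using (b)) together with the fact that $i^*i_*\cong\mathrm{id}$ for the closed immersion $i$, so that $\Gc$ is recovered from $i_*\Gc$ up to isomorphism.

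For (8) I would argue by uniqueness rather than by comparing differentials directly. Weyman's geometric technique \cite[Chap.~5]{weyman2}, applied with affine base $W^\ell$, incidence correspondence $Z_\ell$, and vector bundle $\Vc=\Gc$, produces a minimal complex of free graded $A_\ell$-modules whose terms are precisely \eqref{weymanFp} and which, viewed as a complex of sheaves on $W^\ell$, is quasi-isomorphic to $\Rb p_{1*}(p_2^*\Gc)$; this is the content of Weyman's Basic Theorem \cite[(5.1.2)]{weyman2}. By parts (2), (3) and (4), $\Wb_\ell^\bullet(\Gc)$ is also a minimal free graded complex of $A_\ell$-modules with the same terms, quasi-isomorphic to the same object. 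Since a minimal free complex representing a given object of the derived category is unique up to isomorphism (Corollary~\ref{uniqueness}, compare \cite[(5.2.5)]{weyman2}), the two complexes are isomorphic.

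The essential work has already been done in Section~1, especially in Theorem~\ref{WlRF} and Corollary~\ref{uniqueness}; what remains here is largely bookkeeping. The step that calls for the most care is (c) together with the base-change identity $(p'_2)^*\,i_*=j_*\,p_2^*$ underlying (4): these must be verified scheme-theoretically, not merely set-theoretically, using that $i$ is a closed immersion and $\Oc_{\P(W)}(1)$ is a line bundle. For (8), the subtle point is matching conventions --- Weyman's $F_{-p}$ versus our $F^p$, and Weyl versus Schur functors --- so that the uniqueness statement applies verbatim to both complexes.
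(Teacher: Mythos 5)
Your proposal is correct and follows exactly the route the paper intends: the paper's own proof is the single sentence ``This follows from Proposition~\ref{WldeterminesF}, Theorem~\ref{WlRF}, and Corollary~\ref{uniqueness},'' i.e.\ transport everything across the embedding $i\colon X\hookrightarrow\P(W)$ by setting $\Wb_\ell^\bullet(\Gc)=\Wb_\ell^\bullet(i_*\Gc)$. You have simply supplied the bookkeeping the authors omit (the projection formula for cohomology, the Cartesian-square/flat base change identification of the two incidence varieties, and the uniqueness argument for part (8)), all of which is sound.
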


\begin{proof}
This follows from Proposition~\ref{WldeterminesF},
Theorem~\ref{WlRF}, and Corollary~\ref{uniqueness}.  
\end{proof}

\begin{definition}
\label{GWCdef}
$\Wb^\bullet_\ell(\Gc)$ is the \emph{\bfseries $\ell$-th generalized
Weyman complex} of $\Gc$.
\end{definition}

\subsection{The Excluded Case} All of our results assume $\ell
\le \dim(W)-1$.  Here we discuss the complications that arise when
$\ell = \dim(W) = N+1$.

First observe that Proposition~\ref{WOaa} fails when $\ell = \dim(W)$.
To see why, set $\ell = \dim(W) = N+1$ in \eqref{wey_complex}
to obtain
\begin{align*}
\cdots \longrightarrow 0 &\longrightarrow {\textstyle\bigwedge^{N+1}}
K^{N+1} \otimes 
H^N(\P(W),\Omega_{\P(W)}^a(a-N-1))  \otimes A_{N+1}(-N-1)\\
 &\longrightarrow {\textstyle\bigwedge^{a}} K^{N+1} \otimes
H^a(\P(W),\Omega_{\P(W)}^a)  \otimes A_{N+1}(-a) \longrightarrow 0
\longrightarrow \cdots, 
\end{align*}
where the displayed terms are in degrees $q = -1,0$.  Note also that
\[
H^N(\P(W),\Omega_{\P(W)}^a(a-N-1)) \simeq {\textstyle\bigwedge^a} W\quad
\text{and} \quad
H^a(\P(W),\Omega_{\P(W)}^a) \simeq K.
\]
Since $H^q(Z_{N+1},p_2^*\Omega^a_{\P(W)}(a))$ is the $q$-th cohomology
of this complex, it follows that the isomorphism of
Proposition~\ref{WOaa} is replaced with an exact sequence
\begin{align*}
0 \to {\textstyle\bigwedge^{N+1}} K^{N+1}\, &\otimes
{\textstyle\bigwedge^{a}} W \otimes A_{N+1}(-N-1) \to\\
&\Wb_{N+1}(\ed(a)) \to H^0(Z_{N+1},p_2^*\Omega^a_{\P(W)}(a)) \to 0.
\end{align*}
It follows that Corollary~\ref{propcor} needs to be replaced with a
similar exact sequence.  We suspect that the proof of
Theorem~\ref{WlRF} could be adapted to this situation, though the
proof would be considerably more complicated.  

From the point of view of resultants, however, the assumption $\ell
\le \dim(W)-1$ is not overly restrictive.  When one has a vector
bundle on $\P(W)$ and $\ell = \dim(W)$, our theory does not apply, but
one still has the classical Weyman complex from \cite{weyman1,
weyman2, weyman3}.  Since the resultant is just a power of the
determinant in this case, we do not lose much by excluding $\ell =
\dim(W)$.

%%%%%%%%%%%%%

\subsection{Weyman Complexes and Fourier-Mukai Transform}  Here
we observe that the Weyman complex
$\mathbf{W}_{\ell}^\bullet(\Vc)$ can be regarded as a
Fourier-Mukai transform when $\Vc$ is a vector bundle on an
irreducible projective variety $X$.

We begin with the definition of a Fourier-Mukai transform \cite[Section
5]{huybrechts}.  Denote by $\mathbf{D}^b(A)$ a bounded derived
category of coherent sheaves of a scheme $A$.

\begin{definition}
Let $M$ and $N$ be projective varieties. Given the two projections 
\[
p : M\times N \to M, \quad q : M\times N\to N
\]
and an object $\Pc\in\mathbf{D}^b(M\times N)$, the
\emph{\bfseries Fourier-Mukai transform} is the functor
\[
\mathrm{\Phi}_{\Pc} : \mathbf{D}^b(M)\to \mathbf{D}^b(N),
\quad \Ec^\bullet\mapsto \mathbf{R} q_* (p^*
\Ec^\bullet\otimes^{\mathbf{L}}\Pc).
\]
The object $\Pc$ is called a \emph{\bfseries Fourier-Mukai
kernel} of the Fourier-Mukai transform.
\end{definition}

Notice that if $\Pc$ is a complex of locally free sheaves,
then $\otimes^\mathbf{L}$ is the usual tensor product.  Also, 
$p^*$ is the usual pull-back as the projection map $p$ is flat.

Denote by $W$ the set of global sections of the very ample line bundle
$\Oc_X(1)$ on $X$ and recall the basic diagram of projections
\[
\xymatrix{
Z_\ell \ar[dr] \ar@/^/[drr]^{p_2} \ar@/_/[ddr]_{p_1} \\
& W^{\ell} \times X \ar[r]_(.62){\pi_2} \ar[d]^{\pi_1} & X\\
& W^{\ell},
}
\]
for the 
incidence variety 
\[
Z_\ell = \{(f_1,\dots,f_\ell,x) \in W^{\ell}\times X \mid f_1(x) =
\cdots = f_\ell(x) = 0\}.
\]

\begin{theorem}
Let $X$, $W$ and $Z_\ell$ be as above and assume $1\le \ell \le
\dim(W)- 1$.  If $\Vc$ is a vector bundle on $X$, then the complex
$\Wb^\bullet_\ell(\Vc)$ represents the Fourier-Mukai transform
\[
\mathrm{\Phi}_{\Oc_{Z_{\ell}}}(\Vc) =
\mathbf{R}\pi_{1*}(\pi_2^*\Vc\otimes^\mathbf{L} \Oc_{Z_{\ell}})  
\]
with respect to the kernel $\Oc_{Z_{\ell}}$.
\end{theorem}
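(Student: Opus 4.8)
The plan is to reduce the statement directly to Theorem~\ref{WlRF} by identifying the Fourier-Mukai transform $\mathbf{R}\pi_{1*}(\pi_2^*\Vc\otimes^{\mathbf{L}}\Oc_{Z_\ell})$ with $\mathbf{R}p_{1*}(p_2^*\Vc)$ and then applying part (4) of Theorem~\ref{GWC} (equivalently Theorem~\ref{WlRF}, since a vector bundle on $X$ is a coherent sheaf and pushes forward to a coherent sheaf on $\P(W)$). First I would let $j: Z_\ell \hookrightarrow W^\ell\times X$ denote the closed embedding, so that $p_1 = \pi_1\circ j$ and $p_2 = \pi_2 \circ j$. Since $Z_\ell$ is the total space of the vector bundle $K^\ell\otimes\Rc$ over $X$ (as recalled in Section~1, via \cite[Prop.\ 5.1.1]{weyman2}) it is in particular a local complete intersection in $W^\ell\times X$, so $\Oc_{Z_\ell} = j_*\Oc_{Z_\ell}$ has finite Tor-dimension as an $\Oc_{W^\ell\times X}$-module; hence $\pi_2^*\Vc\otimes^{\mathbf{L}}\Oc_{Z_\ell}$ is a bounded complex and the derived tensor product is computed by a locally free resolution of $\Oc_{Z_\ell}$ (the Koszul complex on the $\ell$ sections cutting out $Z_\ell$ inside the affine bundle).

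The key identification is the projection-formula statement $\mathbf{R}\pi_{1*}(\pi_2^*\Vc\otimes^{\mathbf{L}}\Oc_{Z_\ell}) \simeq \mathbf{R}p_{1*}(p_2^*\Vc)$. I would establish this in two steps. First, $\pi_2^*\Vc\otimes^{\mathbf{L}}j_*\Oc_{Z_\ell} \simeq j_*(j^*\pi_2^*\Vc) = j_*(p_2^*\Vc)$: this is the standard projection formula for the closed embedding $j$, valid because $\Vc$ is locally free so $\pi_2^*\Vc$ is locally free and the derived pullback-pushforward formula $j_*(\Lb j^*(-)\otimes^{\mathbf{L}}-) \simeq (-)\otimes^{\mathbf{L}} j_*(-)$ applies with $\Lb j^*\pi_2^*\Vc = j^*\pi_2^*\Vc = p_2^*\Vc$. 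Second, applying $\mathbf{R}\pi_{1*}$ and using $\mathbf{R}\pi_{1*}\circ j_* = \mathbf{R}\pi_{1*}\circ\mathbf{R}j_* = \mathbf{R}(\pi_1\circ j)_* = \mathbf{R}p_{1*}$ gives the desired isomorphism in $\mathbf{D}^b(W^\ell)$. With this in hand, Theorem~\ref{WlRF} (or Theorem~\ref{GWC}(4) applied to $\Gc = \Vc$) says precisely that $\Wb_\ell^\bullet(\Vc)$, viewed as a complex of sheaves on $W^\ell$, represents $\mathbf{R}p_{1*}(p_2^*\Vc)$, and we are done.

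The main obstacle is purely bookkeeping around the derived tensor product: one must be careful that $\Oc_{Z_\ell}$ is genuinely being used as an object of $\mathbf{D}^b(W^\ell\times X)$ (not just as a sheaf on $Z_\ell$), and that the Fourier-Mukai formalism of \cite{huybrechts} as recalled above—where the kernel lives on the product—matches the setup of Section~1, where $Z_\ell$ is treated as a variety with its own projections $p_1, p_2$. The compatibility $\pi_i\circ j = p_i$ makes this transparent, but it is worth spelling out so the reader sees that $\Phi_{\Oc_{Z_\ell}}(\Vc) = \mathbf{R}\pi_{1*}(\pi_2^*\Vc\otimes^{\mathbf{L}}\Oc_{Z_\ell})$ and $\mathbf{R}p_{1*}(p_2^*\Vc)$ are literally the same object of the derived category. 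A secondary point to note is that $\pi_1$ is not proper (its fibers are affine spaces $W^\ell$), but $\mathbf{R}\pi_{1*}$ still makes sense on quasi-coherent complexes and, because $W^\ell$ is affine, it is computed by taking global sections; since $\Vc$ is a coherent sheaf on the projective variety $X$, the composite $\mathbf{R}p_{1*}(p_2^*\Vc)$ has coherent (indeed finitely generated graded) cohomology by the computation of Section~1, so the statement is well-posed and no further finiteness argument is needed.
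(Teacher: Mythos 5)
Your proposal is correct and follows essentially the same route as the paper: reduce to Theorem~\ref{WlRF} (equivalently Theorem~\ref{GWC}(4)) via the identification $\mathbf{R}\pi_{1*}(\pi_2^*\Vc\otimes^{\mathbf{L}}\Oc_{Z_\ell})\simeq\mathbf{R}p_{1*}(p_2^*\Vc)$, using that $\pi_2^*\Vc$ is locally free so the derived tensor product is the ordinary one. You are merely more explicit than the paper about the projection formula for the closed embedding $j:Z_\ell\hookrightarrow W^\ell\times X$, which the paper treats as immediate.
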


\begin{proof}
We have seen that $\mathbf{W}_{\ell}^\bullet(\Vc)$, when regarded as a
complex in $\mathbf{D}^b(W^\ell)$, represents
\[
\mathbf{R} p_{1*}(p_2^*\Vc) \simeq  
\mathbf{R} \pi_{1*}(\pi_2^*\Vc\otimes \Oc_{Z_{\ell}}).
\]
However, $\Vc$ is locally free, which implies that the same is true
for $\pi_2^*\Vc$.  Hence $\Fc \mapsto \pi_2^*\Vc\otimes \Fc$ is an
exact functor, so that $\pi_2^*\Vc\otimes -=
\pi_2^*\Vc\otimes^\mathbf{L} -$.  The theorem follows immediately.
\end{proof}

This theorem implies that we can compute the Fourier-Mukai transform
of $\Vc$ for the kernel $\Oc_{Z_\ell}$ using the Tate resolution of
$i_* \Vc$ on $\P(W)$.

%%%%%%%%%%%%%

\end{document}